\newtheorem{remark}{Remark}
\newtheorem{theorem}{Theorem}
\newtheorem{definition}{Definition}
\newtheorem{lemma}{Lemma}
\newtheorem*{lemma*}{Lemma}
\begin{document}

\title{The prolific backbone  for supercritical superprocesses.}

\maketitle

\begin{center}
{\large J. Berestycki}\footnote{{\sc Universit\'e Pierre et Marie Curie, Paris, France.} E-mail: julien.berestycki@upmc.fr } \,
{\large  A. E. Kyprianou}\footnote{ {\sc University of Bath, UK..} E-mail: a.kyprianou@bath.ac.uk} \, {\large and}
{\large A. Murillo-Salas}\footnote{ {\sc University of Bath, UK.} E-mail: as454@bath.ac.uk}
 \end{center}
\vspace{0.2in}

\begin{abstract} 
\noindent We develop an idea of Evans and O'Connell \cite{EO},  Engl\"ander and Pinsky \cite{EP} and Duquesne and Winkel \cite{DW} by giving a pathwise construction of the so called `backbone' decomposition for supercritical superprocesses. Our results also complement 
a related result  for critical $(1+\beta)$-superprocesses given in Etheridge and Williams \cite{EW}. Our approach relies heavily on the use of Dynkin-Kuznetsov $\mathbb{N}$-measures. 
\bigskip

\noindent {\sc Key words and phrases}: Superprocesses, $\mathbb{N}$-measure, backbone, conditioning on extinction, prolific individuals.

\bigskip

\noindent MSC 2000 subject classifications: 60J80, 60E10.
\end{abstract}

\vspace{0.5cm}

\section{Introduction}
In Evans and O'Connell \cite{EO}, and later in Engl\"ander and Pinsky \cite{EP}, a new decomposition of a supercritical superprocess with quadratic branching mechanism was introduced in which one may write the distribution of the superprocess at time $t\geq0$ as the result of summing two independent processes together. The first is a copy of the original process conditioned on extinction. The second process is understood as the aggregate accumulation of mass from independent copies of the original process conditioned on extinction which have immigrated `continuously' along the path of an auxilliary dyadic branching particle diffusion which starts with a Poisson number of particles. The embedded branching particle system is known as the {\it backbone} (as opposed to the {\it spine} or {\it immortal particle} which appears in another related decomposition,   introduced in Roelly-Coppeletta and Rouault (1989) and Evans  (1993)). In both \cite{EO} and \cite{EP} the decomposition is seen through the semi-group evolution equations which drive the process semi-group. However no pathwise construction is offered.

In Duquesne and Winkel \cite{DW} a version of this decomposition which, albeit does not take account of spatial motion, was established in much greater generality. In their case, quadratic branching is replaced by a general branching mechanism $\psi$ which is the Laplace exponent of a spectrally positive L\'evy process and which satisfies the conditions
 $0<-\psi'(0+)<\infty$ and 
 $
\int^\infty   1/\psi(\xi)  {\rm d}\xi <\infty. 
$
Moreover, the decomposition is offered in the pathwise sense and described through the growth of genealogical trees embedded within the underling continuous state branching process. 
In their case the backbone is a continuous-time Galton Watson process and the general nature of the branching mechanism induces three different kinds of immigration. Firstly there is continuous immigration which  is described by a Poisson point process of independent processes along the backbone where the rate of immigration is given by a so-called excursion measure which assigns zero initial mass,  and finite life length of the  immigrating processes. A second Poisson point process  along the backbone describes the immigration of independent processes where the rate of immigration is given by the law of the original process conditioned on extinction and with an initial mass randomised by an infinite measure. This accounts for so-called discontinuous immigration. Finally, at the times of branching of the backbone, independent copies of the original process conditioned on extinction are immigrated with randomly distributed initial mass which depends on the number of offspring at the branch point. The last two forms of immigration do not occur when the branching mechanism is purely quadratic. 

Concurrently to the work of \cite{DW} and within the  class of branching mechanisms corresponding to spectrally positive L\'evy processes with paths of unbounded variation (also allowing for the case that $-\psi'(0+)=\infty$), Bertoin et al. \cite{Betal} identify the aforementioned backbone as characterising prolific individuals within the genealogy of the underling continuous state branching process. Here, a prolific individual is understood to be an individual whose descendants become infinite in number.

In this paper we develop the decomposition of Duquesne and Winkel \cite{DW} further by adding in the following additional features. 
We allow the possibility $\int^\infty 1/\psi(\xi){\rm d}\xi =\infty$ which includes the possibility of supercritical processes whose total mass may, with positive probability, die out without this ever happening in a finite time. This also allows the inclusion of branching mechanisms which belong to spectrally positive L\'evy processes of bounded variation (previously excluded in \cite{Betal} and \cite{DW}).  Secondly our decomposition takes care of spatial motion of individuals, thereby bringing the Duquesne-Winkel decomposition back into the setting of superprocesses. Finally, in the case that we ignore spatial motion, our analysis also allows for the case that $-\psi'(0+)=\infty$. Our proof is fundamentally different to that of \cite{DW} and relies largely on the manipulation of the semi-group evolution equations in the spirit of \cite{EO}, taking advantage of the so called $\mathbb{N}$-measure of Dynkin and Kuznetsov \cite{DK}.

The remainder of the paper is structured as follows.  In the next section we introduce some preliminary notation and remind the reader of some standard results relevant to the subsequent exposition. In Section \ref{semigroupsection}  we describe a branching particle diffusion on which independent superprocesses immigrate in three different ways. In particular we give a key result in which the semi-group of the aforementioned process with immigration is characterised. With the latter in hand, we are able to state and prove in Section \ref{main},  the backbone decomposition for supercritical superprocesses. Finally in Section \ref{proof} we give a proof of the the key analytical result in Section \ref{semigroupsection}. Along the way we shall also establish the slightly stronger backbone decomposition for the case of continuous state branching processes (ie. when spatial considerations are ignored).


\section{Preliminaries}

In this section we outline some standard notation and mathematical tools as well as key existing results, all of which will be the ingredients that together will make up the main result.

\subsection{$(\mathcal{P},\psi)$-superprocess}\label{loglaplace}
Suppose that $X=\{X_t: t\geq 0\}$ is any  superprocess motion on $\mathbb{R}^d$ which is well defined for initial configurations in $\mathcal{M}_F(\mathbb{R}^d)$, the space of finite and compactly supported measures, having an associated conservative diffusion semi-group $\mathcal{P}: = \{\mathcal{P}_t: t\geq 0\}$ on $\mathbb{R}^d$ and general branching mechanism $\psi$ taking the form 
\begin{equation*}
\psi(\lambda) = \alpha \lambda + \beta\lambda^2 + \int_{(0,\infty )} (e^{-\lambda x} - 1 + \lambda x \mathbf{1}_{\{x<1\}} )\Pi({\rm d}x)
\end{equation*}
for $\lambda \geq 0$
where $\alpha\in\mathbb{R}$, $\beta\geq 0$ and $\Pi$ is a measure concentrated on $(0,\infty)$ which satisfies $\int_{(0,\infty)}(1\wedge x^2)\Pi({\rm d}x)<\infty$.  This implies that the total mass of the process $X$ is a continuous-state branching process with branching mechanism $\psi$ for which standard references, e.g. \cite{D2, E}, dictate that we need to assume that $-\psi'(0+)<\infty$. Note however that without this condition it is always the case that $-\psi'(0+)\in(-\infty,\infty]$ and within this regime, continuous-state branching processes are always well defined; see for example \cite{G}. To exclude the case of explosive behaviour, we assume throughout that 
\[
\int_{0+}\frac{1}{|\psi(\xi)|}{\rm d}\xi =\infty.
\]
Moreover, we insist that $\psi(\infty)=\infty$ which means that with positive probability the event $\lim_{t\uparrow\infty}||X_t||=0$ will occur; see for example the summary in Chapter 10 of Kyprianou \cite{K}. We refer to such processes throughout as $(\mathcal{P}, \psi)$-superprocesses.

\begin{remark}\rm
It is worthy of note that the assumption that $\mathcal{P}$ is a conservative diffusion semi-group on $\mathbb{R}^d$ can easily be replaced throughout by the much weaker assumption that $\mathcal{P}$ is a general Borel right Markov process with Lusin state space, just as in \cite{EO} or \cite{D1, D2}, at no cost to the analysis. Indeed all of the proofs go through verbatim. However, purely for the sake of presentation, we keep to the more familiar Euclidian setting.
\end{remark}

\begin{remark}\rm
Whilst the vast majority of all literature concerning $(\mathcal{P}, \psi)$-superprocesses requires the branching mechanism satisfies $-\psi'(0+)<\infty$, an example for which an infinite branching rate is permitted can be found in Fleischmann and Sturm \cite{FSt}.
\end{remark}

For each $\mu\in\mathcal{M}_F(\mathbb{R}^d)$,    we denote the law of $X$ with initial configuration $\mu$ by  $\mathbb{P}_\mu$. The following  standard result from the theory of superprocesses (c.f. Dynkin \cite{D1, D2} for example) describes the evolution of $X$ as a Markov process.

\begin{lemma}\label{semigroup}
For all $f\in bp(\mathbb{R}^d)$, the space of non-negative, bounded and measurable functions on $\mathbb{R}^d$,   
\[
 -\log\mathbb{E}_\mu(e^{- \langle f, X_t\rangle}) =  \int_{\mathbb{R}^d }u_f(x, t)\mu({\rm d}x), \, \mu\in\mathcal{M}_F(\mathbb{R}^d), \, t\geq 0
\]  
where  $u_f(x,t)$ is the unique non-negative solution to the integral equation 
\begin{equation}
u_f(x,t )  =\mathcal{P}_t [f](x) - \int_0^t {\rm d}s\cdot \mathcal{P}_s [\psi(u_f(\cdot,t-s))](x).
\label{mild}
\end{equation}
\end{lemma}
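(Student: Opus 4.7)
The plan is to combine two structural properties of the $(\mathcal{P},\psi)$-superprocess $X$: the branching property, which yields the linear-in-$\mu$ form of the log-Laplace functional, and the Markov property together with a Duhamel-type differentiation, which yields the mild equation~\eqref{mild}. Uniqueness of the non-negative solution of~\eqref{mild} is essentially independent and will be the main obstacle, because under our hypotheses $\psi$ may fail to be Lipschitz at the origin.

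First I would use the branching property: for $\mu_1,\mu_2\in\mathcal{M}_F(\mathbb{R}^d)$, $X$ under $\mathbb{P}_{\mu_1+\mu_2}$ is equal in law to the sum of two independent copies under $\mathbb{P}_{\mu_1}$ and $\mathbb{P}_{\mu_2}$, so that
$$
-\log\mathbb{E}_{\mu_1+\mu_2}\bigl[e^{-\langle f,X_t\rangle}\bigr]=-\log\mathbb{E}_{\mu_1}\bigl[e^{-\langle f,X_t\rangle}\bigr]-\log\mathbb{E}_{\mu_2}\bigl[e^{-\langle f,X_t\rangle}\bigr].
$$
Iterating on finite sums of atoms and extending by a standard approximation argument based on the weak continuity of $\mu\mapsto\mathbb{E}_\mu[e^{-\langle f,X_t\rangle}]$ produces a measurable non-negative function $u_f(\cdot,t)$, necessarily equal to $-\log\mathbb{E}_{\delta_x}[e^{-\langle f,X_t\rangle}]$, such that $-\log\mathbb{E}_\mu[e^{-\langle f,X_t\rangle}]=\int u_f(x,t)\mu({\rm d}x)$ for every $\mu\in\mathcal{M}_F(\mathbb{R}^d)$. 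Next, applying the Markov property at an intermediate time $s\in(0,t)$ and substituting this representation with $u_f(\cdot,t-s)$ in place of $f$ yields the nonlinear semi-group identity $u_f(x,t)=V_s[u_f(\cdot,t-s)](x)$, where $V_s[g](x):=u_g(x,s)$. Differentiating $s\mapsto\mathcal{P}_s[u_f(\cdot,t-s)](x)$ produces $-\mathcal{P}_s[\psi(u_f(\cdot,t-s))](x)$ after the motion-generator contributions cancel against the spatial part of $\partial_s V_s$, and integrating from $0$ to $t$ delivers~\eqref{mild}. To legitimise this computation under our weak hypotheses on $\psi$ I would first approximate $\psi$ by a sequence $\psi_n$ with finite L\'evy measure (for which the classical theory of Dynkin \cite{D1,D2} applies verbatim), and then pass to the monotone limit in~\eqref{mild}, using the standing assumption $\int_{0+}1/|\psi(\xi)|\,{\rm d}\xi=\infty$ to rule out any loss of mass in the cumulant.

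The hard part is uniqueness in~\eqref{mild}. Since $-\psi'(0+)$ may be infinite, a direct Gronwall estimate on the difference of two non-negative solutions fails near the origin. I would circumvent this by exploiting the a priori sup-norm bound on $u_f(\cdot,t)$ valid on compact time intervals, which follows from monotonicity of the Laplace functional in $f$ together with the analogous Laplace bound for the total-mass CSBP. This bound confines the arguments of $\psi$ to a compact subinterval of $[0,\infty)$ on which the convex function $\psi$ is Lipschitz with some constant $K$. Subtracting two putative solutions of~\eqref{mild}, taking $\mathcal{P}_s$-expectations (a sup-norm contraction since $\mathcal{P}$ is conservative), and applying Gronwall on successive finite time intervals then yields uniqueness and closes the argument.
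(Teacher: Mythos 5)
The paper offers no proof of Lemma~\ref{semigroup}: it is stated as a standard result with a citation to Dynkin \cite{D1, D2}, so there is no internal argument to compare yours against. Your outline --- branching property to obtain linearity of the log-Laplace functional, Markov property plus a Duhamel-type computation (with a finite-L\'evy-measure approximation of $\psi$) for the mild equation, and Gronwall for uniqueness --- is the standard derivation sitting behind that citation.

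There is, however, a genuine error in the way you frame uniqueness. You present the possibility $-\psi'(0+)=\infty$ as the obstacle and claim to circumvent it via an a priori sup-norm bound that ``confines the arguments of $\psi$ to a compact subinterval of $[0,\infty)$ on which $\psi$ is Lipschitz.'' But that bound only delivers $u_f(\cdot,t)\in[0,M]$ for some finite $M$, and this interval necessarily contains the origin: for compactly supported $f$ one has $u_f(y,0)=f(y)=0$ for $y$ off the support, and more generally $u_f$ cannot be bounded below away from zero in the spatial setting. On $[0,M]$ the convex function $\psi$ is Lipschitz precisely when $-\psi'(0+)<\infty$, so your workaround buys nothing beyond the hypothesis already in force; if $\psi'(0+)=-\infty$ it simply fails. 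This is exactly the obstruction the paper itself identifies in Section~\ref{proof} in the discussion surrounding (\ref{needsfinitederiv}), and it is why $-\psi'(0+)<\infty$ is taken as a standing assumption for the superprocess results and is relaxed only in the purely CSBP case (Theorem~\ref{main-2}), where the analogue of $u_f$ is a deterministic function bounded below by a strictly positive $\underline{u}_T$ on each finite time horizon. Under the lemma's actual hypothesis $-\psi'(0+)<\infty$ your Gronwall argument is fine; the extra generality you suggest does not materialise.
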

Here we have used the standard inner product notation, for $f\in bp(\mathbb{R}^d)$ and  $\mu\in\mathcal{M}(\mathbb{R}^d)$, the space of measures on $\mathbb{R}^d$.
\[
\langle f , \mu\rangle =  \int_{\mathbb{R}^d }f(x)\mu({\rm d}x).
\]
Accordingly we shall write $||\mu|| = \langle 1,\mu \rangle$.

\begin{remark}\rm\label{CSBPsemigrp}
In the case that we take $\mathcal{P}$ to correspond to a particle remaining stationary at a point, equation (\ref{mild}) collapses to the classical integral equation describing the evolution of a continuous state branching process. As alluded to above, it is known in this case that a unique non-negative solution exists, even in the case that $-\psi'(0+)=-\infty$.
\end{remark}

\subsection{Criticality}\label{psi*}

As noted above the total mass of a $(\mathcal{P}, \psi)$-superprocess is a continuous-state branching process with branching mechanism $\psi$. Since there is no interaction between spatial motion and branching we can therefore characterise the $(\mathcal{P}, \psi)$-superprocess into the catagories of supercritical, critical and subcritical accordingly with the same catagories for continuous-state branching processes. Respectively, these cases correspond to $\psi'(0+)<0$, $\psi'(0+)=0$ and $\psi'(0+)>0$. Recall that even when $X$ is supercritical, it is possible that the process becomes {\it extinguished}, i.e. $\lim_{t\uparrow \infty} ||X_t|| =0$. The probability of the latter event is described in terms of the largest root, say $\lambda^*$, of the equation $\psi(\lambda)=0$. Note that it is known (cf. \cite{G}) that $\psi$ is strictly convex with $\psi(0)=0$ and hence since $\psi(\infty)=\infty$ and $\psi'(0+)<0$ it follows that there are exactly two roots in $[0,\infty)$, one of which is $\lambda^*$ and the other is 0. For $\mu\in \mathcal{M}_F(\mathbb{R}^d)$ we have
\begin{equation}
 \mathbb{P}_\mu (\lim_{t\uparrow \infty} ||X_t|| =0) = e^{-\lambda^* ||\mu||}.
 \label{extinguishpr}
\end{equation}
We also recall that, if in addition 
\begin{equation}
\int^\infty \frac{1}{\psi(\xi)} {\rm d}\xi < \infty, 
\label{oo}
\end{equation}
then the event $\{\lim_{t\uparrow \infty} ||X_t|| =0\}$ agrees with the event of 
{\it extinction}, namely $\{\zeta<\infty\}$ where
\[
 \zeta = \inf\{t> 0 : ||X_t || = 0\}.
\]
Moreover, when the integral test in (\ref{oo}) fails, the supercritical continuous-state branching process becomes extinct with zero probability. This means that the event of becoming extinguished corresponds to the total mass trickling away to zero  but none the less being strictly positive at all finite times. An example of an infinite mean supercritical branching mechanism for which the phenomena of becoming extinguished but not extinct is  $\psi(\lambda) = \lambda - \lambda^\alpha$ where $\alpha\in (0,1)$. A second example in this class is Neveu's branching mechansim $\psi(\lambda) = \lambda \log \lambda$. Note that in the first example, the associated spectrally positive L\'evy process has paths of bounded variation as it can be written as the difference of an $\alpha$-stable subordiantor and a linear unit-rate drift. The second example corresponds to a spectrally positive L\'evy process with paths of unbounded variation as in that case it is known that the underlying L\'evy measure is given by $\Pi({\rm d}x) = x^{-2}{\rm d}x$.

\bigskip

\begin{center}
{\bf For the remainder of the paper, unless otherwise stated, we shall henceforth assume only that $\psi$ is a non-exploding, supercritical branching mechanism satisfying $-\psi'(0+)<\infty$. }
\end{center}

\bigskip

It is well known that there is a link between $\psi$ and another branching mechanism $\psi^*$ where, for $\lambda \geq - \lambda^*$,
\begin{eqnarray}
 \psi^*(\lambda) &:=& \psi(\lambda + \lambda^*) \notag\\
&=& \alpha^* \lambda + \beta \lambda^2 + \int_{(0,\infty)}(e^{-\lambda x} - 1+\lambda x\mathbf{1}_{\{x<1\}}) e^{-\lambda^* x}\Pi({\rm d}x)
\label{psistar}
\end{eqnarray}
and
\[
\alpha^* = \alpha + 2\beta\lambda^* + \int_{(0,1)}(1- e^{-\lambda^* x})x\Pi({\rm d}x).
\]
The connection between $\psi$ and $\psi^*$ has a distinct probabilistic interpretation that we shall now briefly discuss. 

Recall that a continuous-state branching process with branching mechanism $\psi$ can always be written as a time-changed  L\'evy process with no negative jumps and whose Laplace exponent is precisely $\psi$; see for example \cite{L, K}. The assumption  $\psi'(0+)<0$ implies that the underlying L\'evy process drifts to $+\infty$. Moreover, by a classical result, the branching mechanism $\psi^*$ corresponds to the underlying L\'evy process conditioned to drift to $-\infty$ (see Exercise 8.1 of \cite{K}). As the next lemma confirms, it also turns out that $\psi^*$ is the branching mechanism of  a superprocess which can be identified as the $(\mathcal{P}, \psi)$-superprocess  conditioned to become extinguished. (Similar results can be found in \cite{Betal}, Abraham and Delmas \cite{AD} and Sheu \cite{S})

\begin{lemma}\label{killedSP}
For each $\mu\in\mathcal{M}_F(\mathbb{R}^d)$, define the law of $X$ with initial configuration $\mu$ conditioned on becoming extinguished by $\mathbb{P}^*_\mu$ (with expectation operator $\mathbb{E}^*_\mu$). Specifically, for all events $A$, measurable in the natural sigma algebra of $X$,  
\[
\mathbb{P}^*_\mu(A)  = \mathbb{P}_\mu(A| \lim_{t\uparrow\infty}|| X_t || =0).
\]
Then, for all uniformly bounded $f:\mathbb{R}^d\rightarrow [-\lambda^*,\infty)$,  
\[
-\log \mathbb{E}^*_{\mu}(e^{-\langle f, X_t\rangle}) = \int_{\mathbb{R}^d}u^*_f(x, t)\mu({\rm d}x)
\]
where 
\begin{equation}
u^*_f(x,t)  =  u_{f+\lambda^*}(x,t) - \lambda^* 
\label{uandu*}
\end{equation} 
and it is the unique solution of 
\begin{equation}
u^*_f(x, t) = \mathcal{P}_t [f](x) - \int_0^t {\rm d}s\cdot \mathcal{P}_s [\psi^*(u^*_f(\cdot,t-s))](x)
\label{u*}
\end{equation}
where $\psi^*(\lambda) = \psi(\lambda + \lambda^*)$ for $\lambda\geq -\lambda^*$. That is to say $(X, \mathbb{P}^*_\mu)$ is a $(\mathcal{P}, \psi^*)$-superprocess.
\end{lemma}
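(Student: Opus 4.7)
The plan is a standard conditioning/$h$-transform argument built on top of Lemma \ref{semigroup}. Let $\mathcal{E}=\{\lim_{t\uparrow\infty}\|X_t\|=0\}$. First I would verify that $\mathbb{P}_\mu(\mathcal{E}\mid\mathcal{F}_t)=e^{-\lambda^*\|X_t\|}$, where $(\mathcal{F}_t)_{t\geq0}$ is the natural filtration of $X$. This follows from the Markov property of $X$ together with formula (\ref{extinguishpr}) applied at the random measure $X_t\in\mathcal{M}_F(\mathbb{R}^d)$: on $\mathcal{F}_t$, the event $\mathcal{E}$ has conditional probability $\mathbb{P}_{X_t}(\mathcal{E})=e^{-\lambda^*\|X_t\|}$. (One needs that $\|X_t\|$ has compactly supported law at each $t$, which is immediate since $\mu$ is finite and compactly supported and $\mathcal{P}$ is conservative.)

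With this identity in hand, for any uniformly bounded $f:\mathbb{R}^d\to[-\lambda^*,\infty)$, the function $g:=f+\lambda^*$ lies in $bp(\mathbb{R}^d)$, so Lemma \ref{semigroup} applies to $g$. Using the definition of $\mathbb{P}^*_\mu$ and the identity for $\mathbb{P}_\mu(\mathcal{E}\mid\mathcal{F}_t)$,
\begin{equation*}
\mathbb{E}^*_\mu\bigl(e^{-\langle f,X_t\rangle}\bigr)
=\frac{\mathbb{E}_\mu\bigl(e^{-\langle f,X_t\rangle}e^{-\lambda^*\|X_t\|}\bigr)}{e^{-\lambda^*\|\mu\|}}
=e^{\lambda^*\|\mu\|}\,\mathbb{E}_\mu\bigl(e^{-\langle g,X_t\rangle}\bigr)
=\exp\!\Bigl(-\!\int_{\mathbb{R}^d}\bigl(u_{g}(x,t)-\lambda^*\bigr)\mu({\rm d}x)\Bigr).
\end{equation*}
Reading off the integrand identifies $u^*_f(x,t)=u_{f+\lambda^*}(x,t)-\lambda^*$, which is (\ref{uandu*}). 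Note that $u^*_f(\cdot,t)\geq-\lambda^*$ since $u_g\geq0$, so the substitution below stays inside the domain of $\psi^*$.

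Next I would derive (\ref{u*}) from (\ref{mild}). Apply (\ref{mild}) to $g=f+\lambda^*$:
\begin{equation*}
u_{g}(x,t)=\mathcal{P}_t[f+\lambda^*](x)-\int_0^t{\rm d}s\cdot\mathcal{P}_s\bigl[\psi(u_{g}(\cdot,t-s))\bigr](x).
\end{equation*}
Since $\mathcal{P}$ is conservative, $\mathcal{P}_t[\lambda^*]=\lambda^*$, so subtracting $\lambda^*$ from both sides and substituting $u_{g}=u^*_f+\lambda^*$ together with the definition $\psi^*(\lambda)=\psi(\lambda+\lambda^*)$ gives exactly (\ref{u*}). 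By the same token, $\psi$ is strictly convex on $[0,\infty)$ with $\psi(\lambda^*)=0$, so $\psi^*$ is itself a branching mechanism of the same L\'evy-type form, whose L\'evy measure is the exponentially tilted $e^{-\lambda^*x}\Pi({\rm d}x)$ and whose drift coefficient is $\alpha^*$ as displayed after (\ref{psistar}); this is a direct algebraic computation.

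Finally, uniqueness of the solution to (\ref{u*}) in the class $u^*_f(\cdot,t)\geq-\lambda^*$ transfers from uniqueness for (\ref{mild}) via the bijection $u\mapsto u+\lambda^*$. I expect the only real subtlety is the first step: justifying $\mathbb{P}_\mu(\mathcal{E}\mid\mathcal{F}_t)=e^{-\lambda^*\|X_t\|}$ rigorously as an almost-sure identity (as opposed to only for deterministic initial measures), which one can do either by the branching Markov property applied to the tail event $\mathcal{E}$, or by showing that $e^{-\lambda^*\|X_t\|}$ is a bounded $\mathbb{P}_\mu$-martingale converging a.s.\ to $\mathbf{1}_{\mathcal{E}}$ thanks to (\ref{extinguishpr}) and the $0/1$-type dichotomy for the total-mass CSBP.
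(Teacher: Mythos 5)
Your argument is correct and follows the same route as the paper's own proof: both condition on $\mathcal{F}_t$, use the Markov property with (\ref{extinguishpr}) to replace $\mathbf{1}_{\mathcal{E}}$ by $e^{-\lambda^*\|X_t\|}$, recognise the resulting Laplace functional as $e^{-\langle u_{f+\lambda^*}(\cdot,t)-\lambda^*,\mu\rangle}$ via Lemma~\ref{semigroup} applied to $g=f+\lambda^*$, and then transfer uniqueness from (\ref{mild}) to (\ref{u*}) by the shift $u\mapsto u+\lambda^*$ (the paper phrases this last step by noting that $\psi^*$ is itself an admissible branching mechanism with $\psi^{*\prime}(0+)>0$, but the content is identical). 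You are slightly more explicit than the paper, which leaves the verification that $u_{f+\lambda^*}-\lambda^*$ solves (\ref{u*}) as "trivial," but there is no substantive difference in approach.
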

\begin{proof} Define the event $\mathcal{E} = \{\lim_{t\uparrow\infty}|| X_t || =0 \}$.
Making use of the Strong Markov property and (\ref{extinguishpr}), we have  for $f\in bp(\mathbb{R}^d)$,
\begin{eqnarray*}
\mathbb{E}^*_{\mu}(e^{-\langle f, X_t\rangle}  ) &=&\mathbb{E}_{\mu}(e^{-\langle f, X_t\rangle} |  \mathcal{E} )\\
&=& e^{\lambda^* ||\mu||} \mathbb{E}_{\mu}(e^{-\langle f, X_t\rangle} \mathbf{1}_{\mathcal{E}} )\\
&=& e^{\lambda^* ||\mu||} \mathbb{E}_{\mu}(e^{-\langle f, X_t\rangle} \mathbb{P}_{X_t}(\mathcal{E} ))\\
&=& e^{\lambda^* ||\mu||} \mathbb{E}_{\mu}(e^{-\langle f, X_t\rangle} e^{-\lambda^* || X_t  ||})\\
&=& e^{-\langle  u_{f+\lambda^*}(\cdot, t) -  \lambda^*, \mu\rangle }.
\end{eqnarray*}
It is trivial to check that $ u_{f+\lambda^*}(\cdot, t) -  \lambda^*$ solves (\ref{u*}). Moreover,  since $\psi^*$ is the Laplace exponent of a spectrally positive L\'evy process and $\psi^{*\prime}(0+) = \psi'(\lambda^*)>0$, it follows that the solution to (\ref{u*}) is unique by Lemma \ref{semigroup}.
\end{proof}

\subsection{$\mathbb{N}^*$-measure}

Associated to the laws $\{\mathbb{P}^*_{\delta_x}: x\in\mathbb{R}^d\}$ are the  measures $\{\mathbb{N}^*_x: x\in \mathbb{R}^d\}$, defined on the same measurable space,  which satisfy 
\begin{equation}
\mathbb{N}^*_x (1- e^{-\langle f, X_t \rangle}) = -\log \mathbb{E}^*_{\delta_x}(e^{-\langle  f, X_t\rangle})
\label{DK}
\end{equation}
for all $f\in bp(\mathbb{R}^d)$ and $t\geq 0$. Such measures are formally defined and explored in detail in \cite{DK}. The measures $\{\mathbb{N}^*_x:x\in\mathbb{R}^d\}$ will play a crucial role in the forthcoming analysis. Intuitively speaking, the branching property implies that $\mathbb{P}^*_{\delta_x}$ is an infinitely divisible measure on the path space of $X$, $\mathcal{X} : = \mathcal{M}(\mathbb{R}^d)\times[0,\infty)$, and  (\ref{DK})  is a `L\'evy-Khinchine' formula in which  $\mathbb{N}^*_x$ plays the role of its `L\'evy measure'. 
In the context of \cite{DW}, the measure $\mathbb{N}^*$ is the analogue of what Duquesne and Winkel call the {\it excursion measure}, however, whilst the latter encodes genealogical trees, $\mathbb{N}^*$ does not.

\subsection{Prolific individuals}

In Duquesne and Winkel \cite{DW} and Bertoin et al. \cite{Betal} it was shown that there are certain geneaologies embedded in supercritical continous state branching process which are exclusively responsible for the infinite growth  of the process. They show that one may identify such geneaologies in the form of a continuous-time Galton Watson process (that is to say, a version of the Galton Watson process in which individuals remain alive for an independent and exponentially distributed period of time with a common rate before splitting). The generator of such a continuous-time Galton Watson processes is usually identified in the form 
\[
 F(s) = q\sum_{n\geq 0}p_n (s^n -s ),
\]
where $q>0$ is the common rate of splitting and $\{p_n: n\geq 0\}$ is the offspring distribution. For the particular continuous-time Galton Watson process representing the prolific geneaology in a supercritical continuous-state branching process with branching mechanism $\psi$, the aforementioned authors show that 
\begin{equation}
 F(s) = \frac{1}{\lambda^*}\psi(\lambda^*(1-s)), \, s\in(0,1).
 \label{brmech}
\end{equation}
Moreover the individual components of $F$ are given by $q= \psi'(\lambda^*)$, $p_0 = p_1 = 0$ and 
for $n\geq 2$,
\begin{equation}
 p_n = \frac{1}{\lambda^* \psi'(\lambda^*)}\left\{\beta (\lambda^*)^2\mathbf{1}_{\{n=2\}} + (\lambda^*)^n\int_{(0,\infty) } \frac{x^n}{n!} e^{-\lambda^*x} \Pi({\rm d}x)\right\}.
 \label{pn}
\end{equation}

Duquesne and Winkel \cite{DW} go further and show that, when $\psi'(0+)\in(-\infty, 0)$ and (\ref{oo}) holds,  the law of a  continuous state branching processes with branching mechanism $\psi$ is equal to that of a process in which immigration occurs on the  continuous-time Galton Watson process of prolific individuals in three different ways. These are,  two types of Poisson immigration along the life span of each prolific individual  and an additional package of immigration at each point of fission of prolific individuals. In the latter case, if a prolific individual has $n$ prolific offspring then a continuous-state branching process with branching mechanism $\psi^*$ immigrates at that moment of time with random initial mass given by the distribution 
\begin{equation}
\eta_n({\rm d}y) =  \frac{1}{p_n\lambda^* \psi'(\lambda^*)}\left\{\beta (\lambda^*)^2\delta_0({\rm d}y)\mathbf{1}_{\{n=2\}} + (\lambda^*)^n \frac{y^n}{n!} e^{-\lambda^*y} \Pi({\rm d}y)\right\}.
\label{pump-prime}
\end{equation}
In the next section we progress the result of \cite{DW} by relaxing their assumptions on $\psi$ to include the cases that $\psi'(0+)= -\infty$ and $\int^\infty 1/\psi(\xi){\rm d}\xi=\infty$ as well as taking into account spatial considerations at the expense of keeping the condition $-\psi'(0+)\in(0,\infty)$. 

\section{Backbone decomposition}

\subsection{A branching particle diffusion with three types of immigration}\label{semigroupsection}

 Let  $\mathcal{M}_{\rm a}(\mathbb{R}^d)\subset\mathcal{M}_F(\mathbb{R}^d)$  be the space of finite atomic measures on $\mathbb{R}^d$. We shall write  $Z$ for a branching $\mathcal{P}$-motion whose total mass has generator given by (\ref{brmech}). 
 Hence $Z$ is the $\mathcal{M}_{\rm a}(\mathbb{R}^d)$-valued process in which individuals, from the moment of birth, live for an independent and exponentially distributed period of time with parameter $\psi'(\lambda^*)$ during which they execute a $\mathcal{P}$-diffusion issued from their position of birth and at death they give birth at the same position to an independent number of offspring with distribution $\{p_n : n\geq 2\}.$    We shall also refer to $Z$ as the $(\mathcal{P}, F)$-{\it backbone}. 
Its  initial  configuration is denoted by  $\nu\in\mathcal{M}_{\rm a}(\mathbb{R}^d)$. 
Moreover, when referring to individuals in $Z$ we may use of classical Ulam-Harris notation, see for example p290 of Harris and Hardy \cite{HH}. The only feature we really need of the the Ulam-Harris notation is that individuals are uniquely identifiable amongst   $\mathcal{T}$, the set labels of individuals realised in $Z$. For each individual $u\in \mathcal{T}$ we shall write $\tau_u$ and $\sigma_u$ for its birth and death times respectively and  $\{z_u(r): r\in[\tau_u, \sigma_u]\}$ for its spatial trajectory.

Inspired by \cite{DW} and \cite{Betal}, we are interested in  immigrating  independent $(\mathcal{P}, \psi^*)$-superprocesses on   $Z$ in a way that the immigration rate is related to the subordinator 
whose Laplace exponent is given by 
\begin{equation}
\phi(\lambda) = \psi^{*\prime}(\lambda) - \psi^{*\prime}(0) = \psi'(\lambda + \lambda^*) - \psi'(\lambda^*)
\label{isbernstein}
\end{equation}
 together with some additional  immigration at the splitting times of $Z$.  Note in particular that the right hand side of  (\ref{isbernstein}) can be written more explicitly in the form
\[
\phi(\lambda) = 2\beta \lambda + \int_{(0,\infty)}(1- e^{-\lambda x})xe^{-\lambda^* x}\Pi({\rm d }x).
\]

\begin{definition}\label{LZdef}
For $\nu\in\mathcal{M}_{\rm a}(\mathbb{R}^d)$ and $\mu\in\mathcal{M}_F(\mathbb{R}^d)$ let
$Z$ be  a $(\mathcal{P}, F)$-branching diffusion with initial configuration $\nu$ and $\widetilde{X}$ an independent copy of $X$ under $\mathbb{P}^*_\mu$.
Then we define the measure-valued stochastic process $\Lambda = \{\Lambda_t : t\geq 0\}$ on $\mathbb{R}^d$ by 
\[
\Lambda  = \widetilde{X} + I^{\mathbb{N}^*} + I^{\mathbb{P}^*} + I^\eta
\]
where  the processes     $I^{\mathbb{N}^*}= \{I^{\mathbb{N}^*}_t: t\geq 0\}$,  $I^{\mathbb{P}^*}= \{I^{\mathbb{P}^*}_t: t\geq 0\}$ and $I^\eta = \{I^\eta_t : t\geq 0\}$ are  independent of $\widetilde{X}$ and, conditionally on $Z$, are independent of one another. 
Morover, these three processes are described pathwise as follows.

\begin{description}
\item[(i) Continuous immigration:] The process 
$I^{\mathbb{N}^*}$ is measure-valued on $\mathbb{R}^d$ 
such that
\[
 I^{\mathbb{N}^*}_t := \sum_{u\in\mathcal{T}}\sum_{t\wedge \tau_u< r\leq t\wedge \sigma_u}X^{(1,u,r)}_{t-r}
\]
where, given $Z$, independently for each $u\in\mathcal{T}$ such that $\tau_u <t$, the processes $X^{(1,u,r)}_{\cdot}$ are countable in number and correspond to $\mathcal{X}$-valued, Poissonion immigration along the space-time trajectory $\{(z_u(r), r): r\in(\tau_u, t\wedge \sigma_u] \}$ with rate $2\beta {\rm d}r \times {\rm d}\mathbb{N}^*_{z_u(r)}$.

\item[(ii) Discontinuous immigration:] The process
$I^{\mathbb{P}^*}$ is  measure-valued on $\mathbb{R}^d$ 
such that
\[
 I^{\mathbb{P}^*}_t := \sum_{u\in\mathcal{T}}\sum_{t\wedge \tau_u< r\leq t\wedge \sigma_u}X^{(2,u,r)}_{t-r}
\]
where, given $Z$, independently for each $u\in\mathcal{T}$ such that $\tau_u <t$, the processes $X^{(2,u,r)}_{\cdot}$ are countable in number and correspond to $\mathcal{X}$-valued, Poissonion immigration along the space-time trajectory $\{(z_u(r), r): r\in(\tau_u, t\wedge \sigma_u] \}$ with rate
$ {\rm d}r\times \int_{y\in(0,\infty)}ye^{-\lambda^* y}\Pi({\rm d }y)\times {\rm d}\mathbb{P}^*_{y \delta_{z_u(r)}}$.


\item[(iii) Branch point biased immigration:] The process $I^\eta$ is also measure valued  on $\mathbb{R}^d$ such that
\[
 I^\eta_t : = \sum_{u\in\mathcal{T}}\mathbf{1}_{\{\sigma_u\leq t\}}
X^{(3,u)}_{t-\sigma_u}
\]
where, given $Z$, independently for each $u\in\mathcal{T}$ such that $\sigma_u\leq t$, the process $X^{(3,u)}_\cdot$ is an independent copy of $X$ issued at time $\sigma_u$ with law $\mathbb{P}_{Y_u\delta_{z_u(\sigma_u)}}$ where $Y_u$ is an independent random variable with distribution $\eta_{N_u}({\rm d}y)$.

\end{description}
Moreover, we denote the law of $\Lambda$ by $\mathbf{P}_{\mu\times\nu}$.
\end{definition}


\begin{remark}\rm
In the very special case that $\psi(\lambda) = -{\mathtt a} \lambda + {\mathtt b}\lambda^2 $, where ${\mathtt a}, {\mathtt b}>0$, note that the discontinuous and branch point biased immigration are absent. Moreover, $\psi^*(\lambda )  = {\mathtt a }\lambda + {\mathtt b}\lambda^2$, the backbone has binary splitting and  therefore agrees with the backbone in Evans and O'Connell \cite{EO}.
\end{remark}

Note that the total mass $Z_t(\mathbb{R}^d)$ of the backbone is the continuous-time Galton-Watson process  of prolific individuals found in Bertoin et al. \cite{Betal}.
Note also that the process $((\Lambda,Z), \mathbf{P}_{\mu\times\nu})$ is Markovian. This is immediate from three important facts. Firstly  the backbone, $Z$, is a Markov branching diffusion. Secondly, conditional on $Z$ immigrating mass occurs 
independently according to a Poisson point process or as additional indpendent packages at the splitting times of $Z$. Finally, the mass which has immigrated by a fixed time evolves in Markovian way thanks to the branching property. Indeed, using these facts it is not difficult to justify that, for all $s,t\geq 0$
\[
 \mathbf{E}_{\mu\times \nu}(e^{-\langle f, \Lambda_{t+s} \rangle} | \{(\Lambda_u, Z_u): u\leq t\})
= h_s(\Lambda_t, Z_t),
\]
where for $m\in\mathcal{M}(\mathbb{R}^d)$, $n\in\mathcal{M}_{\rm a}(\mathbb{R}^d)$ and $s\geq 0$, $h_s(m,n) = \mathbf{E}_{m\times n}(e^{-\langle f, \Lambda_{s} \rangle} )$.

We conclude with the main result of this section which, amongst other things, shows that $\Lambda$ is a conservative process. The proof is given in section \ref{proof}.

\begin{theorem}\label{PDE} For every $\mu\in\mathcal{M}_F(\mathbb{R}^d)$,  $\nu\in\mathcal{M}_{\rm a}(\mathbb{R}^d)$ and $f,h\in bp(\mathbb{R}^d)$ we have 
\begin{equation}
\mathbf{E}_{\mu\times \nu}(e^{- \langle f, \Lambda_t\rangle - \langle h, Z_t\rangle}) = e^{- \langle u_{f}^*(\cdot , t), \mu\rangle - \langle v_{f,h}(\cdot, t), \nu \rangle} 
\label{randomise this}
\end{equation}
where $\exp\{-v_{f,h}(x,t)\}$ is the unique $[0,1]$-valued solution to the integral equation
\begin{equation}
\label{semi}
e^{-v_{f,h}(x, t)} = \mathcal{P}_t[e^{-h}](x)+\frac{1}{\lambda^*} \int_0^t {\rm d}s\cdot \mathcal{P}_s[\psi^*(-\lambda^* e^{-v_{f,h}(\cdot, t-s)} + u^*_f(\cdot, t-s)) -\psi^*(u^*_f(\cdot, t-s))](x).
\end{equation}
for $x\in\mathbb{R}^d$ and $t\geq 0$. In particular, for each $t\geq 0$,  $\Lambda_t$ has almost surely finite mass.
\end{theorem}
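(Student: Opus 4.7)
The proof strategy is to separate the $\widetilde X$-contribution from the backbone--immigration piece and then carry out a first-jump analysis on the initial backbone particle. Since $\widetilde X$ is independent of $(I^{\mathbb{N}^*},I^{\mathbb{P}^*},I^\eta,Z)$ with law $\mathbb{P}^*_\mu$, Lemma~\ref{killedSP} produces the factor $e^{-\langle u^*_f(\cdot,t),\mu\rangle}$. The branching property of $Z$ combined with the fact that, given $Z$, the three immigration processes decouple as Poisson point processes attached to disjoint particle lifetimes (plus independent pieces at branch events) reduces the remaining computation to the case $\nu=\delta_x$, so I set $g(x,t):=\mathbf{E}_{0\times\delta_x}[e^{-\langle f,\Lambda_t\rangle-\langle h,Z_t\rangle}]$ and aim to show $g=e^{-v_{f,h}}$ with $v_{f,h}$ solving \eqref{semi}.

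Next, I would condition on the spatial trajectory $\rho$ of the initial particle and on its exponential splitting time $\sigma$ (rate $q=\psi'(\lambda^*)$). Over the interval $[0,\sigma\wedge t]$, the Laplace exponent of the combined continuous and discontinuous immigration along $\rho$ is, by the Poisson point process formula and the defining identity \eqref{DK} for $\mathbb{N}^*$,
\[\int_0^{\sigma\wedge t}\Bigl[2\beta u^*_f(\rho_r,t-r)+\int(1-e^{-y u^*_f(\rho_r,t-r)})y e^{-\lambda^* y}\Pi(dy)\Bigr]dr=\int_0^{\sigma\wedge t}\phi(u^*_f(\rho_r,t-r))\,dr,\]
with $\phi$ as in \eqref{isbernstein}. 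On $\{\sigma>t\}$ this exponential multiplies by $e^{-h(\rho_t)}$; on $\{\sigma\le t\}$ there is in addition a factor
\[G(\rho_\sigma,t-\sigma):=\sum_{n\ge 2}p_n\,g(\rho_\sigma,t-\sigma)^n\int e^{-y u^*_f(\rho_\sigma,t-\sigma)}\eta_n(dy),\]
coming from the $N$ independent offspring subtrees (each contributing an independent $g$-factor by the Markov/branching property) and from the branch-point biased immigration package of initial mass $Y\sim\eta_N$. Integrating against $qe^{-q\sigma}d\sigma$ yields a Feynman--Kac integral equation for $g$ which, by Duhamel's principle, is equivalent to the mild form
\[g(x,t)=\mathcal{P}_t[e^{-h}](x)+\int_0^t\mathcal{P}_s\bigl[qG(\cdot,t-s)-(q+\phi(u^*_f(\cdot,t-s)))g(\cdot,t-s)\bigr](x)\,ds.\]

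The main obstacle is then to identify the bracket above with $\frac{1}{\lambda^*}[\psi^*(-\lambda^* g+u^*_f)-\psi^*(u^*_f)]$, which makes the mild form coincide with \eqref{semi}. Writing $\mu=u^*_f$ and $\delta=-\lambda^* g$, and using the identity $q+\phi(\mu)=\psi^{*\prime}(\mu)$, the series $\sum_{n\ge2}(\lambda^* gy)^n/n!=e^{\lambda^* gy}-1-\lambda^* gy$ together with the explicit formulae \eqref{pn}--\eqref{pump-prime} collapses $qG$ to
\[qG=\beta\lambda^* g^2+\frac{1}{\lambda^*}\int e^{-(\lambda^*+\mu)x}(e^{\lambda^* gx}-1-\lambda^* gx)\Pi(dx)+\psi^{*\prime}(\mu)g,\]
which, by expanding $\psi^*$ via \eqref{psistar}, equals $\frac{1}{\lambda^*}[\psi^*(\mu+\delta)-\psi^*(\mu)-\delta\psi^{*\prime}(\mu)]+\psi^{*\prime}(\mu)g$; subtracting $(q+\phi(\mu))g$ then produces the required integrand of \eqref{semi}. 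Uniqueness of $[0,1]$-valued solutions to \eqref{semi} follows by a Picard iteration combined with Gronwall's lemma, using that $\psi^*$ is locally Lipschitz on $[-\lambda^*,\infty)$. Finally, to prove a.s.\ finiteness of $\Lambda_t$, take $f=\theta\cdot\mathbf{1}_{\mathbb{R}^d}$ and $h\equiv 0$ in the theorem: as $\theta\downarrow 0$, both $u^*_\theta$ and $v_{\theta,0}$ vanish by monotone passage to the limit in \eqref{u*} and \eqref{semi}, so $\mathbf{E}_{\mu\times\nu}[e^{-\theta\|\Lambda_t\|}]\to 1$, giving $\|\Lambda_t\|<\infty$ almost surely.
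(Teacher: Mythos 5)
Your overall strategy follows the paper's proof quite closely: factor out $\widetilde{X}$ via Lemma \ref{killedSP}, reduce to $\nu=\delta_x$ by the branching property, condition on the initial particle's trajectory $\rho$ and first splitting time $\sigma$, compute the Poisson-point-process contribution of the continuous and discontinuous immigration via \eqref{DK} (this is the content of the paper's Lemma \ref{L1}), absorb the branch-point package and the offspring subtrees into the factor $G$, pass to a Feynman--Kac/Duhamel mild form (the paper's Lemma \ref{L2}), and finally identify the integrand with that of \eqref{semi} by explicit computation (the paper's Lemma \ref{L3}). The main difference is that you carry out the whole first-jump analysis in a single pass whereas the paper modularises it into Lemmas \ref{L1} and \ref{L2}; that is a presentational choice, not a mathematical one.

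However, there is a genuine arithmetic error in the identification of the integrand. A direct computation from \eqref{pn}--\eqref{pump-prime} gives
\[
qG=\beta\lambda^* g^2+\frac{1}{\lambda^*}\int_{(0,\infty)} \bigl(e^{\lambda^* gx}-1-\lambda^* gx\bigr)e^{-(\lambda^*+\mu)x}\,\Pi(\mathrm{d}x),
\]
with \emph{no} additional $\psi^{*\prime}(\mu)g$ term: the factor $q=\psi'(\lambda^*)$ cancels exactly against the $1/\psi'(\lambda^*)$ in the normalisation of $p_n\eta_n$, leaving nothing over. You have instead written
\[
qG=\beta\lambda^* g^2+\frac{1}{\lambda^*}\int e^{-(\lambda^*+\mu)x}(e^{\lambda^* gx}-1-\lambda^* gx)\Pi(\mathrm{d}x)+\psi^{*\prime}(\mu)g,
\]
i.e.\ with a spurious $+\psi^{*\prime}(\mu)g$. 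If you then subtract $(q+\phi(\mu))g=\psi^{*\prime}(\mu)g$ as you propose, the chain of equalities terminates at $\tfrac{1}{\lambda^*}[\psi^*(\mu+\delta)-\psi^*(\mu)-\delta\psi^{*\prime}(\mu)]$, which differs from the required integrand $\tfrac{1}{\lambda^*}[\psi^*(\mu+\delta)-\psi^*(\mu)]$ by $-\tfrac{\delta}{\lambda^*}\psi^{*\prime}(\mu)=g\psi^{*\prime}(\mu)$. The correct bookkeeping is: $qG=\tfrac{1}{\lambda^*}[\psi^*(\mu+\delta)-\psi^*(\mu)-\delta\psi^{*\prime}(\mu)]=\tfrac{1}{\lambda^*}[\psi^*(\mu+\delta)-\psi^*(\mu)]+(q+\phi(\mu))g$ since $-\delta/\lambda^*=g$ and $q+\phi(\mu)=\psi^{*\prime}(\mu)$; subtracting $(q+\phi(\mu))g$ then gives exactly the integrand of \eqref{semi}. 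In other words, the $\psi^{*\prime}(\mu)g$ you need is already supplied by the Taylor remainder term $-\delta\psi^{*\prime}(\mu)/\lambda^*$ and should not be added a second time to $qG$.

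Two smaller points worth tightening: (i) in the Feynman--Kac step, the killing rate $\phi(u^*_f(\cdot,t-\cdot))$ is time-dependent, so some reduction to the time-homogeneous case (as the paper does, following \cite{EO}) or an argument with the time-inhomogeneous semigroup is needed before applying the Duhamel/FK identity; (ii) uniqueness via Picard--Gronwall requires a Lipschitz constant that is uniform over $x\in\mathbb{R}^d$ and over the admissible range $[-\lambda^*,0]$ of $-\lambda^* e^{-v}$ shifted by $u^*_f(x,t-s)\in[0,\overline{u}_T]$; this is precisely the content of Lemma \ref{L3} and the bound \eqref{needsfinitederiv} in the paper, and it is also where the hypothesis $-\psi'(0+)<\infty$ enters. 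Your remark that ``$\psi^*$ is locally Lipschitz on $[-\lambda^*,\infty)$'' is the right idea but needs the uniformity in $x$ and needs $\psi'(0+)>-\infty$ to hold at the left endpoint.
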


\subsection{Prolific backbone decomposition of a supercritical $(\mathcal{P, \psi})$-superprocess.}\label{main}

A consequence of Theorem \ref{PDE} is the following theorem which constitutes our main result.
It deals with the case that we randomise the law $\mathbf{P}_{\mu\times\nu}$ for $\mu\in\mathcal{M}_F(\mathbb{R}^d)$ by replacing the deterministic choice of $\nu$ with a Poisson random measure having intensity measure $\lambda^*\mu$. We denote the resulting law by $\mathbf{P}_\mu$.

\begin{theorem}\label{main-1}
For any $\mu\in\mathcal{M}_F(\mathbb{R}^d)$, the process $(\Lambda,  \mathbf{P}_\mu)$ is Markovian and has the same law as  $(X,  \mathbb{P}_\mu )$.
\end{theorem}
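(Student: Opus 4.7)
The plan is to reduce Theorem \ref{main-1} to a uniqueness statement for the integral equation (\ref{mild}), exploiting Theorem \ref{PDE} together with the Poisson randomisation of $\nu$. The key analytic identity will be
\begin{equation}\label{keyid}
u_f^*(x,t)+\lambda^*\bigl(1-e^{-v_{f,h}(x,t)}\bigr)\;=\;u_{f+\lambda^*(1-e^{-h})}(x,t), \qquad x\in\mathbb{R}^d,\ t\geq 0,
\end{equation}
valid for every $f,h\in bp(\mathbb{R}^d)$. Once \eqref{keyid} is established, equality in law of $(\Lambda,\mathbf{P}_\mu)$ and $(X,\mathbb{P}_\mu)$ will follow by combining it with the Markov property of the joint process $(\Lambda,Z)$ noted in Section \ref{semigroupsection}, and the Markov property of $\Lambda$ alone under $\mathbf{P}_\mu$ is then automatic since $X$ is Markov under $\mathbb{P}_\mu$.

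To prove \eqref{keyid}, first condition on $\nu$ and apply Theorem \ref{PDE} to obtain the joint Laplace functional of $(\Lambda_t, Z_t)$ under $\mathbf{P}_{\mu\times\nu}$; then average $\nu$ against a Poisson random measure of intensity $\lambda^*\mu$ using the standard Campbell formula to obtain
\[
\mathbf{E}_\mu\bigl[e^{-\langle f,\Lambda_t\rangle-\langle h, Z_t\rangle}\bigr] = \exp\bigl\{-\bigl\langle u_f^*(\cdot,t)+\lambda^*(1-e^{-v_{f,h}(\cdot,t)}),\, \mu\bigr\rangle\bigr\}.
\]
Setting $w(x,t):=u_f^*(x,t)+\lambda^*(1-e^{-v_{f,h}(x,t)})$, the strategy is to show that $w$ satisfies (\ref{mild}) with input data $f+\lambda^*(1-e^{-h})$; the uniqueness statement in Lemma \ref{semigroup} then gives $w=u_{f+\lambda^*(1-e^{-h})}$, which is \eqref{keyid}. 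To verify the integral equation, multiply (\ref{semi}) by $-\lambda^*$, add the identity $\lambda^*=\lambda^*\mathcal{P}_t[1](x)$ (using conservativity of $\mathcal{P}$) to obtain a representation for $\lambda^*(1-e^{-v_{f,h}(x,t)})$, and add the result to (\ref{u*}): the leading terms recombine into $\mathcal{P}_t[f+\lambda^*(1-e^{-h})](x)$. The integrand of the remaining double integral simplifies because $-\lambda^*e^{-v_{f,h}}+u_f^*=w-\lambda^*$ and, by the definition of $\psi^*$ in (\ref{psistar}), $\psi^*(\xi-\lambda^*)=\psi(\xi)$, so the nonlinear term collapses to $\psi(w)$. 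This algebraic collapse is the main technical step.

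Having established \eqref{keyid}, the special case $h=0$ gives the equality of one-dimensional marginals $\mathbf{E}_\mu[e^{-\langle f,\Lambda_t\rangle}]=\mathbb{E}_\mu[e^{-\langle f, X_t\rangle}]$. The equality of all finite-dimensional distributions is then handled by induction. For two times $0<t_1<t_2$, condition on the natural filtration of $(\Lambda, Z)$ at time $t_1$ and invoke the Markov property of the joint process together with Theorem \ref{PDE} applied to the time-increment $t_2-t_1$ to rewrite the two-time Laplace functional as a one-time Laplace functional of $(\Lambda_{t_1}, Z_{t_1})$ under $\mathbf{P}_\mu$ with test functions $f_1+u^*_{f_2}(\cdot,t_2-t_1)$ and $v_{f_2,0}(\cdot,t_2-t_1)$. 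Applying \eqref{keyid} at time $t_1$ to these test functions, and using the case $h=0$ of \eqref{keyid} to simplify $u^*_{f_2}(\cdot,t_2-t_1)+\lambda^*(1-e^{-v_{f_2,0}(\cdot,t_2-t_1)})=u_{f_2}(\cdot,t_2-t_1)$, the expression collapses to $\exp\{-\langle u_{f_1+u_{f_2}(\cdot,t_2-t_1)}(\cdot,t_1),\mu\rangle\}$, which is the corresponding two-time Laplace functional of $X$ under $\mathbb{P}_\mu$ by the semigroup property of $X$. The extension to $n$ times is a routine iteration of the same argument, completing the proof.
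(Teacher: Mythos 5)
Your proposal is correct in substance, and the key identity
\[
u_f^*(x,t)+\lambda^*\bigl(1-e^{-v_{f,h}(x,t)}\bigr)=u_{f+\lambda^*(1-e^{-h})}(x,t)
\]
is precisely the algebraic heart of the paper's proof too: the paper shows both sides of its equation (\ref{needtoprove}) solve (\ref{mild}) with input $f+\lambda^*(1-e^{-h})$, which is the same verification you carry out. Where you genuinely diverge is what you do with the identity afterwards. The paper uses the identity (applied with two different choices of input) to prove the conditional Poisson structure — i.e.\ that given $\Lambda_t$, $Z_t$ is Poisson with intensity $\lambda^*\Lambda_t$ — and from this, together with the Markov property of the pair $(\Lambda,Z)$, deduces that $\Lambda$ alone is Markov; equality of one-dimensional marginals (the $h=0$ case) then closes the argument. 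You instead compute all finite-dimensional distributions directly by an induction on the number of times, conditioning on the joint filtration of $(\Lambda,Z)$ and iterating Theorem \ref{PDE} together with the identity, letting the Markov property of $\Lambda$ come for free once equality of laws is known. Your route is more hands-on and bypasses the Poissonisation lemma, at the cost of losing it as an explicit byproduct (the paper records it in a remark immediately after the theorem). The paper's route is shorter once the Poissonisation is in hand.

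One technical point you leave implicit: in the induction step you feed $H=v_{f_2,0}(\cdot,t_2-t_1)$ as the $h$-argument into (\ref{randomise this}) and into the key identity, and both are only stated for $h\in bp(\mathbb{R}^d)$. You therefore need $v_{f_2,0}(\cdot,s)$ to be bounded, i.e.\ $e^{-v_{f_2,0}(\cdot,s)}$ to be bounded away from zero uniformly in the spatial variable; Theorem \ref{PDE} only asserts $e^{-v}$ is $[0,1]$-valued and $\Lambda_t$ has a.s.\ finite mass. This can be filled (for instance by observing that the total immigrated mass from a single backbone particle has a distribution independent of its starting position, so $v_{f_2,0}(x,s)\leq -\log\mathbf{E}_{0\times\delta_0}[e^{-\|f_2\|_\infty\|\Lambda_s\|}]<\infty$ uniformly in $x$), but as written the step needs justification. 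The paper's proof sidesteps this entirely because it only ever evaluates the Laplace functional at the given bounded $h$; after that it works at the level of semigroups of a process already known to be Markov, so no iterated substitution is required.
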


\begin{proof} The proof is guided by the calculations found in the proof of  Theorem 3.2 of \cite{EO}.
We start by addressing the claim that $(\Lambda, \mathbf{P}_{\mu})$ is a Markov process. Given the Markov property of the pair $(\Lambda,Z)$, it suffices to show that given $\Lambda_t$, the atomic measure $Z_t$ is equal in law to a Poisson random measure with intensity $\lambda^*\Lambda_t$. Thanks to Campbell's formula for Poisson random measures (see e.g. Section 3.2 of \cite{King}), this is equivalent to showing that for all  $h\in bp(\mathbb{R}^d)$, 
\[
 \mathbf{E}_\mu(e^{-\langle h, Z_t\rangle}| \Lambda_t) 
= \exp\{- \langle \lambda^* (1- e^{-h}), \Lambda_t\rangle\},
\]
which in turn is equivalent to showing that for all  $f,h\in bp(\mathbb{R}^d)$,
\begin{equation}
 \mathbf{E}_\mu(e^{- \langle f, \Lambda_t\rangle - \langle h, Z_t\rangle}) = 
\mathbf{E}_\mu(e^{- \langle \lambda^* (1 - e^{-h}) +f, \Lambda_t\rangle}).
\label{LHSRHS}
\end{equation}
Note from (\ref{randomise this}) however that when we randomise $\nu$ so that it has the law of a Poisson random measure with intensity $\lambda^*\mu$, we find the identity
\[
 \mathbf{E}_\mu(e^{- \langle f, \Lambda_t\rangle - \langle h, Z_t\rangle}) =e^ {-\langle u^*_f(\cdot, t) + \lambda^*(1 - e^{- v_{f,h}(\cdot, t)}), \mu \rangle}.
\]
%
Moreover, if we replace  $f$ by  $ \lambda^* (1 - e^{-h}) +f$ and $h$ by $0$ in (\ref{randomise this}) and again randomise $\nu$ so that it has the law of a Poisson random measure with intensity $\lambda^*\mu$ then we get
\[
 \mathbf{E}_\mu(e^{- \langle \lambda^* (1 - e^{-h}) +f, \Lambda_t\rangle}) =
e^{ -\langle u^*_{\lambda^*(1- e^{-h}) + f}(\cdot, t) + \lambda^*(1 - \exp\{- v_{\lambda^*(1- e^{-h}) + f,0}(\cdot, t)\}), \mu\rangle}.
\]
These last two observations indicate that (\ref{LHSRHS})  is  equivalent to showing that for all $f,h\in bp(\mathbb{R}^d)$, $x\in\mathbb{R}^d$ and $t\geq 0$,
\begin{equation}
 u^*_f(x,t) + \lambda^*(1 - e^{- v_{f,h}(x,t)}) = u^*_{\lambda^*(1- e^{-h}) + f}(x,t) + \lambda^*(1 - e^{- v_{\lambda^*(1- e^{-h}) + f,0}(x,t)}). 
\label{needtoprove}
\end{equation}

Note that both left and right hand side of the equality above are necessarily non-negative given they are the Laplace exponents of the left and right hand sides of (\ref{LHSRHS}). Making use of (\ref{u*}) and (\ref{semi}), it is computationally very straightforward to show that both left and right hand side of (\ref{needtoprove}) solve (\ref{mild}) with initial condition $f +\lambda^*(1-e^{-h})$. Since (\ref{mild}) has a unique solution with this initial condition, namely $u_{f + \lambda^*
(1-e^{-h})}(x,t)$, we conclude that (\ref{needtoprove}) holds true. The proof of the claimed Markov property is thus complete.

\medskip

Having now established the Markov property, the proof is complete as soon as we can show that $(\Lambda, \mathbf{P}_{\mu})$ has the same semi-group as $(X, \mathbb{P}_\mu)$. However, from the previous part of the proof we have already established that when $f,h\in bp(\mathbb{R}^d)$,
\[
 \mathbf{E}_\mu(e^{- \langle f, \Lambda_t\rangle - \langle h, Z_t\rangle})  =
 e^{-\langle u_{\lambda^*(1- e^{-h}) + f}, \mu\rangle}
 = \mathbb{E}_\mu(e^{-\langle f + \lambda^*
(1-e^{-h}), X_t\rangle}).
\]
In particular, choosing $h=0$ we find 
\[
\mathbf{E}_\mu(e^{- \langle f, \Lambda_t  \rangle})= 
  \mathbb{E}_\mu(e^{-\langle f , X_t\rangle}) 
\]
which is equivalent to the equality of the semi-groups of $(\Lambda, \mathbf{P}_{\mu})$ and $(X, \mathbb{P}_\mu)$.
\end{proof}

\begin{remark}\rm
In the proof above, we have established the so-called {\it Poissonisation} property of superprocesses and continuous state branching processes. Namely that, when   treating  $\lambda^* X_t$ as an intensity measure of a Poisson random field, one generates a set of points whose positions are equal in law to the support of $Z_t$. Fleischmann and Swart \cite{FS} appeal directly to this idea to analyse the law of $Z$ in terms of $X$. \end{remark}

\begin{remark}\rm
Once the reader is familiar with the main ideas of Theorem \ref{main-1} it should be quite clear how to describe in a pathwise sense the  backbone-type decomposition in Engl\"ander and Pinsky \cite{EP}. In their paper, they work with a spatially dependent branching mechanism $\psi(\cdot, \lambda) = \mathtt{a}(\cdot)\lambda + \mathtt{b}(\cdot)\lambda^2$. Given the semi-group computations in \cite{EP} 
one may easily construct the associated pathwise decomposition. There is no discontinuous immigration and no branch point biased immigration. However continuous immigration does occur along the backbone at rate $2\mathtt{b}(\cdot){\rm d}t\times{\rm d}\mathbb{N}^*_\cdot$
where $\mathbb{N}^*$ is again the measure constructed in \cite{DK} which is related to law of the superprocess conditioned on extinction. The latter, as well as the law of the backbone, are already  described in analytical detail in \cite{EP}.
\end{remark}

\subsection{Prolific backbone decomposition of a supercritical continuous-state branching process}

The analysis leading to the proof of Theorem \ref{PDE} also reveals that the assumption that $-\psi'(0+)<\infty$ can be dropped when considering the backbone decomposition for continuous state branching processes. Formally we state this as a theorem.

\begin{theorem}\label{main-2} When $\mathcal{P}$ corresponds to a particle remaining stationary at a point, say $0$, the conclusion of Theorem \ref{main-1} still holds for all $\mu = x\delta_0$ with $x>0$, even when $-\psi'(0+) = \infty$.
\end{theorem}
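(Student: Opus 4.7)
The plan is to run through the proofs of Theorems \ref{PDE} and \ref{main-1} and identify the single place where the blanket hypothesis $-\psi'(0+)<\infty$ is essential; in the CSBP setting this place admits a substitute that is insensitive to the finite-mean assumption. I would start by checking that all ingredients of the backbone construction remain well-defined without the condition $-\psi'(0+)<\infty$. Since $\psi$ is strictly convex, finite on $[0,\infty)$, and satisfies $\psi(0)=\psi(\lambda^*)=0$ with $\psi<0$ on $(0,\lambda^*)$, the derivative $\psi'(\lambda^*)$ exists in $(0,\infty)$. Consequently $\psi^{*\prime}(0+)=\psi'(\lambda^*)\in(0,\infty)$, so $(X,\mathbb{P}^*)$ is a standard subcritical $\psi^*$-CSBP with finite mean, the Dynkin-Kuznetsov measure $\mathbb{N}^*$ of (\ref{DK}) is well-defined, and the Galton-Watson backbone $Z$ of rate $\psi'(\lambda^*)$ with offspring distribution $\{p_n\}$ given by (\ref{pn}) is unambiguously constructed, together with the three immigration mechanisms of Definition \ref{LZdef}.

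Next, since $\mathcal{P}_t$ is now the identity, every integral equation encountered in Sections \ref{loglaplace} and \ref{semigroupsection} collapses into a scalar ODE: (\ref{mild}) becomes $\partial_t u_f(t)=-\psi(u_f(t))$ with $u_f(0)=f$; (\ref{u*}) becomes the corresponding ODE with $\psi^*$ in place of $\psi$; and (\ref{semi}) becomes a scalar first-order ODE whose right-hand side is locally Lipschitz in its unknown, because $\psi^{*\prime}(0+)$ is finite. Remark \ref{CSBPsemigrp} (see e.g.\ \cite{G}) guarantees unique non-negative solutions for these ODEs regardless of whether $-\psi'(0+)$ is finite. In particular, the analogue of Lemma \ref{semigroup} remains valid in this generality, and this is the only ingredient from Section \ref{loglaplace} that required the finite-mean hypothesis.

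With these observations in hand, the proof of Theorem \ref{PDE} (to be given in Section \ref{proof}) can be reproduced verbatim to yield the ODE form of (\ref{randomise this}) and (\ref{semi}), and the algebraic manipulation from the proof of Theorem \ref{main-1} then shows that both sides of the scalar version of (\ref{needtoprove}) solve the ODE form of (\ref{mild}) with initial datum $f+\lambda^*(1-e^{-h})$. Uniqueness --- now freely available via Remark \ref{CSBPsemigrp} --- forces them to agree, and the remaining steps identifying the semigroup of $(\Lambda,\mathbf{P}_{x\delta_0})$ with that of $(X,\mathbb{P}_{x\delta_0})$ are purely formal. The main obstacle is simply the verification that uniqueness of solutions to (\ref{mild}) survives the removal of $-\psi'(0+)<\infty$ in the CSBP case; once this is granted, the argument of Section \ref{main} carries through mechanically.
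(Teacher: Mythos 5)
There is a genuine gap in your proof: you identify only \emph{one} of the \emph{two} places in the argument where the standing hypothesis $-\psi'(0+)<\infty$ is invoked, and your justification for why the second issue disappears is incorrect. The first place, as you rightly note, is the uniqueness of solutions to (\ref{mild}), which in the CSBP setting reduces to the classical equation (\ref{csb-evolution}) and is handled by Remark \ref{CSBPsemigrp}; this part of your argument matches the paper. The second place, which you do not address correctly, is the Lipschitz estimate (\ref{needsfinitederiv}) needed in the proof of Lemma \ref{L2} to establish uniqueness of (\ref{immigration-semigroup}). You assert that ``(\ref{semi}) becomes a scalar first-order ODE whose right-hand side is locally Lipschitz in its unknown, because $\psi^{*\prime}(0+)$ is finite,'' but this is not the relevant quantity. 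The Lipschitz modulus involves $\psi^{*\prime}$ evaluated at arguments of the form $-\lambda^* e^{-v}+u^*_f$, which range down to $-\lambda^*+u^*_f$; since $\psi^{*\prime}(-\lambda^*+u)=\psi'(u)$, the modulus blows up as $u^*_f\downarrow 0$ exactly when $-\psi'(0+)=\infty$ --- indeed the paper points this out explicitly after (\ref{needsfinitederiv}), noting the constant $K$ is finite \emph{if and only if} $\psi'(0+)>-\infty$. Consequently the proof of Theorem \ref{PDE} cannot be ``reproduced verbatim''; that is the whole reason the theorem requires a separate argument.

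The paper's actual fix, which is the missing idea in your proposal, is that in the spaceless setting $u^*_f(\cdot,s)$ collapses to the scalar $U^*_\theta(s)$ with $\theta=f(0)$, and $\underline{u}_T:=\inf_{s\le T}U^*_\theta(s)>0$ on any finite horizon (for $\theta>0$). This allows the supremum in (\ref{canbeimproved}) to be taken over $\underline{u}_T\le u^*\le\overline{u}_T$ rather than $0\le u^*\le\overline{u}_T$, and the second assertion of Lemma \ref{L3} then delivers finiteness of the Lipschitz constant even when $-\psi'(0+)=\infty$. In the genuinely spatial case this device is unavailable because a compactly supported $f$ forces $u^*_f(y,0)=0$ for $y$ outside $\mathrm{supp}\,f$, which is precisely why Theorem \ref{main-1} keeps the hypothesis $-\psi'(0+)<\infty$. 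Your write-up needs to supply this lower bound on $U^*_\theta$ and invoke the refined Lipschitz estimate; without it, the claimed uniqueness for (\ref{immigration-semigroup}), and hence the whole characterisation in Theorem \ref{PDE}, does not follow.
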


\section{Proof of Theorems \ref{PDE} and \ref{main-2}}\label{proof}


To prove Theorem \ref{PDE} it suffices to show, thanks to Lemma \ref{killedSP}, that for all $f, h\in bp(\mathbb{R}^d)$, $\nu\in\mathcal{M}_{\rm a}(\mathbb{R}^d)$ and $t\geq 0$, 
\begin{equation}
\mathbf{E}_{\mu\times \nu} (e^{-\langle f, I_t \rangle - \langle  h, Z_t\rangle }) = e^{-\langle v_{f,h}(\cdot, t), \nu\rangle }
\label{toprove}
\end{equation}
where $I  := I^{\mathbb{N^*}} + I^{\mathbb{P}^*} + I^\eta$ and $v_{f,h}$ solves (\ref{semi}). We do this with the help of some preliminary results.


\begin{lemma}\label{L1}
 For all $f\in bp(\mathbb{R}^d)$, $\nu\in\mathcal{M}_{\rm a}(\mathbb{R}^d)$, $\mu\in\mathcal{M}_F(\mathbb{R}^d)$ and $t\geq 0$, 
 we have 
 \[
 \mathbf{E}_{\mu\times \nu} (e^{-\langle f,  I^{\mathbb{N}^*}_t  + I^{\mathbb{P}^*}_t \rangle}| \{Z_s : s\leq t\}) =
 \exp\left\{ 
-\int_0^t   \langle \phi\circ u_f^*(\cdot, t-s),Z_s\rangle {\rm d}s
\right\}
 \]
\end{lemma}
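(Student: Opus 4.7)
The plan is to work conditionally on the backbone $Z$ and apply the Campbell/exponential formula for Poisson point processes to each of the two immigration mechanisms separately, then combine them using the conditional independence built into Definition \ref{LZdef}. The formula $\phi(\lambda) = 2\beta\lambda + \int_{(0,\infty)}(1-e^{-\lambda x})xe^{-\lambda^* x}\Pi({\rm d}x)$ is perfectly engineered so that the two contributions combine into a single Laplace exponent, which is the only step that requires bookkeeping.

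First I would treat $I^{\mathbb{N}^*}_t$. Conditional on $Z$, the $\mathcal{X}$-valued atoms making up $I^{\mathbb{N}^*}_t$ form, for each $u\in\mathcal{T}$, a Poisson point process of $X$-trajectories along the space-time curve $\{(z_u(r),r): r \in (\tau_u, t\wedge\sigma_u]\}$ with intensity $2\beta\,{\rm d}r\times {\rm d}\mathbb{N}^*_{z_u(r)}$, and these immigrated processes evolve for the remaining time $t-r$. The exponential formula, together with the Dynkin--Kuznetsov identity (\ref{DK}) which gives $\mathbb{N}^*_x(1-e^{-\langle f, X_{t-r}\rangle}) = u^*_f(x, t-r)$, yields
\[
\mathbf{E}_{\mu\times\nu}\bigl(e^{-\langle f, I^{\mathbb{N}^*}_t\rangle}\,\big|\,\{Z_s:s\leq t\}\bigr) = \exp\left\{-\sum_{u\in\mathcal{T}} \int_{\tau_u}^{t\wedge\sigma_u} 2\beta\, u^*_f(z_u(r), t-r)\,{\rm d}r\right\}.
\]

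Next I would do the analogous computation for $I^{\mathbb{P}^*}_t$. The intensity ${\rm d}r\times y e^{-\lambda^* y}\Pi({\rm d}y)\times {\rm d}\mathbb{P}^*_{y\delta_{z_u(r)}}$ together with the branching property giving $-\log\mathbb{E}^*_{y\delta_x}(e^{-\langle f, X_{t-r}\rangle}) = y\, u^*_f(x, t-r)$ produces, again via the exponential formula,
\[
\mathbf{E}_{\mu\times\nu}\bigl(e^{-\langle f, I^{\mathbb{P}^*}_t\rangle}\,\big|\,\{Z_s:s\leq t\}\bigr) = \exp\left\{-\sum_{u\in\mathcal{T}} \int_{\tau_u}^{t\wedge\sigma_u}\!\!\!\int_{(0,\infty)}\!\! y e^{-\lambda^* y}\bigl(1-e^{-y u^*_f(z_u(r), t-r)}\bigr)\Pi({\rm d}y)\,{\rm d}r\right\}.
\]
Since $I^{\mathbb{N}^*}$ and $I^{\mathbb{P}^*}$ are conditionally independent given $Z$, the conditional Laplace functional of their sum is the product, and the combined exponent is exactly $\sum_u \int_{\tau_u}^{t\wedge\sigma_u} \phi(u^*_f(z_u(r), t-r))\,{\rm d}r$.

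Finally I would rewrite the sum over individuals and integral over their lifetime segments as a time integral against the empirical measure of the backbone. Since $Z_s = \sum_{u\in\mathcal{T}}\mathbf{1}_{\{\tau_u< s\leq \sigma_u\}}\delta_{z_u(s)}$, a Fubini interchange gives
\[
\sum_{u\in\mathcal{T}} \int_{\tau_u}^{t\wedge\sigma_u} \phi\bigl(u^*_f(z_u(r), t-r)\bigr)\,{\rm d}r = \int_0^t \langle \phi\circ u^*_f(\cdot, t-s), Z_s\rangle\,{\rm d}s,
\]
which yields the claim. The only genuinely delicate point is the rigorous application of the exponential formula to a Poisson point process whose state space is itself path-valued; this is standard (see the construction in \cite{DK}) but measurability of $(x,r)\mapsto u^*_f(x, t-r)$ should be checked so that Campbell's formula applies without issue.
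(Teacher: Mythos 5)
Your proof is correct and follows essentially the same route as the paper's: conditioning on $Z$, applying Campbell's exponential formula to each Poissonian immigration along the lifelines, using the identity (\ref{DK}) together with the branching property of $\mathbb{P}^*$ to evaluate the resulting intensities in terms of $u^*_f$, and then recognising that the two contributions combine into $\phi\circ u^*_f$ and rewriting the sum over lifeline segments as a time integral against $Z$. The only cosmetic difference is that you evaluate the two Poisson point processes separately and multiply the conditional Laplace functionals, whereas the paper's proof carries both contributions together in a single exponential.
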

\begin{proof} 
Using the notation from Definition \ref{LZdef}, write 
\[
\langle f, I^{\mathbb{N}^*}_t  + I^{\mathbb{P}^*}_t\rangle = \sum_{u\in\mathcal{T}}\sum_{t\wedge \tau_u < r \leq t\wedge \sigma_u} \langle
f, X^{(1, u, r)}_{t-r}\rangle  + \sum_{u\in\mathcal{T}}\sum_{t\wedge\tau_u < r \leq t\wedge \sigma_u} \langle
f, X^{(2, u, r)}_{t-r}\rangle .
\]
Hence conditioning on $Z$, appealing to independence of the immigrating processes together with Cambell's formula  and (\ref{DK}) we have 
\begin{eqnarray*}
\lefteqn{ \mathbf{E}_{\mu\times \nu} (e^{-\langle f, I^{\mathbb{N}^*}_t  + I^{\mathbb{P}^*}_t\rangle}   | \{Z_s : s\leq t\}) } && \\
&=&
\exp\left\{
- \sum_{u\in\mathcal{T}}  2\beta  \int_{t\wedge \tau_u}^{t\wedge \sigma_u}{\rm d}r\cdot \mathbb{N}^*_{z_u(r)}(1 - e^{-\langle f,  X_{t-r}\rangle})\right.
\\
&&
\left.\hspace{2cm}
- \sum_{u\in\mathcal{T}} \int_{t\wedge \tau_u}^{t\wedge \sigma_u} \int_{(0,\infty)}{\rm d}r\times ye^{-\lambda^* y} \Pi({\rm d}y) \cdot\mathbb{E}^*_{y\delta_{z_u(r)}}(1 - e^{-\langle f,  X_{t-r}\rangle})
\right\}
\\
&=&
\exp\left\{
- \sum_{u\in\mathcal{T}}  2\beta  \int_{t\wedge \tau_u}^{t\wedge \sigma_u}  {\rm d}r\cdot u_f^*(z_u(r), t-r) \right.
\\
&&
\left.\hspace{2cm}
- \sum_{u\in\mathcal{T}} \int_{t\wedge \tau_u}^{t\wedge \sigma_u}  \int_{(0,\infty)} {\rm d}r\times ye^{-\lambda^* y} \Pi({\rm d}y)\cdot(1 - \exp\{-  u_f^*(z_u(r), t-r) y\})
\right\}
\\
&=& 
\exp\left\{
- \int_0^t {\rm d}r\cdot  \langle \phi\circ u_f^*(\cdot, t-r), Z_r \rangle
\right\}
\end{eqnarray*}
as required. \end{proof}

\begin{lemma}\label{L2}Suppose that  $f,h\in bp(\mathbb{R}^d)$ and 
$g_s(x)$ is jointly measurable in $(s,x)$ and bounded on finite time horizons of $s$. Then for  $x\in\mathbb{R}^d$ and $t\geq 0$,
\[
 \mathbf{E}_{\mu\times \nu} \left(\exp\left\{ 
-\int_0^t   \langle g_{t-s},Z_s\rangle {\rm d}s - \langle f, I^\eta_t\rangle -\langle h, Z_t\rangle
\right\}\right) = e^{-\langle \omega(\cdot, t) ,  \nu\rangle}
\]
where $\exp\{-\omega(x,t)\}$ is the unique $[0,1]$-valued solution to the integral equation
\begin{equation}
e^{-\omega(x,t)} = \mathcal{P}_t[e^{-h}](x) +\frac{1}{\lambda^*} \int_0^t {\rm d}s\cdot \mathcal{P}_{s}[H_{t-s}(\cdot, -\lambda^* e^{-\omega(\cdot, t-s)}) - \lambda^*g_{t-s}(\cdot)e^{-\omega(\cdot, t-s)} ](x)
\label{immigration-semigroup}
\end{equation}
 and, for $\lambda\geq -\lambda^*$, 
\begin{eqnarray*}
H_{t-s}(\cdot, \lambda) &:=&\lambda\psi'(\lambda^*) + \beta \lambda^2  + \int_{(0,\infty)} (e^{-\lambda x} - 1 + \lambda x ) 
e^{-(\lambda^* + u^*_f(\cdot, t-s))x }\Pi({\rm d}x).
\label{H}
\end{eqnarray*}
\end{lemma}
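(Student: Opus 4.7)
By the branching property of $Z$ and the conditional independence of $I^\eta$ across distinct subtrees---note also that $\mu$ does not enter the integrand at all---the left-hand side of (\ref{toprove}) factorises over the atoms of $\nu$. It therefore suffices to verify the formula for $\nu=\delta_x$; write $F(x,t):=e^{-\omega(x,t)}$ for the resulting expectation. The initial $Z$-particle executes a $\mathcal P$-diffusion $\xi$ from $x$ for an independent exponential lifetime $\sigma$ of rate $q:=\psi'(\lambda^*)$. On $\{\sigma>t\}$ no branching has occurred and $I^\eta_t=0$; on $\{\sigma=r\le t\}$, at time $r$ the particle spawns $N\ge 2$ independent daughters at $\xi_r$ with $\mathbb P(N=n)=p_n$, and an independent copy of $X$ with initial mass $Y\delta_{\xi_r}$, $Y\sim\eta_N$, is immigrated. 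Setting $V_u(x):=q+g_u(x)$ and using the branching property on the $N$ daughter subtrees together with Lemma \ref{killedSP} to compute the Laplace functional of the immigrated superprocess, one obtains the ``first-split'' representation
\begin{equation*}
F(x,t)=\mathbb E^{\mathcal P}_x\!\left[e^{-\int_0^t V_{t-s}(\xi_s)ds}e^{-h(\xi_t)}\right]+\int_0^t q\,\mathbb E^{\mathcal P}_x\!\left[e^{-\int_0^r V_{t-s}(\xi_s)ds}A(\xi_r,t-r)\right]dr,
\end{equation*}
where $A(y,u):=\sum_{n\ge2}p_n F(y,u)^n\int e^{-y' u^*_f(y,u)}\eta_n(dy')$.

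The next step is purely algebraic. Inserting the explicit forms (\ref{pn}) and (\ref{pump-prime}) of $p_n$ and $\eta_n$ and identifying the series $\sum_{n\ge2}(\lambda^* F y')^n/n!=e^{\lambda^* F y'}-1-\lambda^* F y'$ under the integral against $\Pi$, a direct computation yields the key identity
\begin{equation*}
qA(y,u)=qF(y,u)+\frac{1}{\lambda^*}H_u(y,-\lambda^* F(y,u)),
\end{equation*}
so the source term in the first-split representation is precisely $qF+H(\cdot,-\lambda^* F)/\lambda^*$.

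To pass from this probabilistic representation to the mild equation (\ref{immigration-semigroup}), I would apply the elementary identity $e^{-\int_0^r V_{t-s}(\xi_s)ds}=1-\int_0^r V_{t-u}(\xi_u)e^{-\int_u^r V_{t-s}(\xi_s)ds}\,du$ inside each expectation, and then use Fubini together with the Markov property of $\xi$ at the intermediate time $u$. A routine reorganisation shows that the two copies of $qF$---one from the $qA$ decomposition, the other from the $q$-part of $V_{t-u}$---cancel exactly, leaving precisely (\ref{immigration-semigroup}) for $F=e^{-\omega}$. Uniqueness of $[0,1]$-valued solutions of (\ref{immigration-semigroup}) is then a standard Gronwall estimate on the difference of two candidates, using the uniform Lipschitz continuity of $\lambda\mapsto H_u(x,-\lambda)$ on $[0,\lambda^*]$ (a consequence of $\phi$ in (\ref{isbernstein}) being a Bernstein function) together with the boundedness of $g$. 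The main technical obstacle will be the careful time-inhomogeneous bookkeeping of the shifted index $t-s$ in both $g_{t-s}$ and $u^*_f(\cdot,t-s)$ inside $H_{t-s}$ during the Feynman--Kac conversion, while the algebraic reduction of the previous paragraph, though somewhat laborious, is mechanical.
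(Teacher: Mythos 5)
Your proposal follows essentially the same route as the paper's proof: reduce to $\nu=\delta_x$ by branching, condition on the first fission time to get a Feynman--Kac representation, exploit the explicit forms of $p_n$ and $\eta_n$ to collapse the series into $H$, and then convert the Feynman--Kac form into the mild integral equation (\ref{immigration-semigroup}). Two bookkeeping choices you make are slightly cleaner than the paper's: you combine the branching rate $q$ and the source $g$ into a single killing rate $V_u=q+g_u$ and do a single Feynman--Kac conversion, whereas the paper first strips off the $e^{-qt}$ factor via the Dynkin trick (cf.\ Lemma 4.1.1 of \cite{D2}) and then converts the $g$-killed semigroup $\mathcal P^g$ to $\mathcal P$ in a separate step; and you handle the time-dependent $g_{t-s}$ directly rather than first reducing to time-invariant $g$ as the paper does. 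Your key algebraic identity $qA=qF+\tfrac1{\lambda^*}H_u(\cdot,-\lambda^*F)$ is exactly the paper's computation, and I have checked that your Feynman--Kac identity together with Markov at the intermediate time does reproduce (\ref{immigration-semigroup}) after the two $qF$ terms cancel.

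The one point that is not quite right is your justification of the Lipschitz estimate needed for uniqueness. You attribute the uniform Lipschitz continuity of $\lambda\mapsto H_u(x,-\lambda)$ on $[0,\lambda^*]$ to $\phi$ in (\ref{isbernstein}) being a Bernstein function, but this is not the correct mechanism. Writing $H_u(x,\lambda)=\chi_{u^*_f(x,u)}(\lambda)$ as in the paper, one has $\chi'_u(\lambda)=\psi'(\lambda+\lambda^*+u)-\psi'(\lambda^*+u)+\psi'(\lambda^*)$, and at the boundary point $\lambda=-\lambda^*$, $u=0$ (which occurs for $x$ outside the support of a compactly supported $f$), this equals $\psi'(0+)$. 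Thus the Lipschitz constant over $u\in[0,\overline u_T]$, $\lambda\in[-\lambda^*,0]$ is finite if and only if $-\psi'(0+)<\infty$; this is precisely the content of the paper's Lemma~\ref{L3} and is where the standing assumption $-\psi'(0+)<\infty$ is genuinely used (see the discussion after the proof of Theorem~\ref{PDE} and the proof of Theorem~\ref{main-2}). The Bernstein property of $\phi$ is used in the paper for a different purpose (to bound the source $g_s=\phi(u^*_f(\cdot,s))$, via concavity, in the proof of Theorem~\ref{PDE}), not for the Lipschitz bound on $H$. You should replace your parenthetical remark with an appeal to the finiteness of $\psi'(0+)$ together with the explicit derivative formula for $\chi'_u$.
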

\begin{proof}
Following similar arguments to those in the proof of Theorem 2.2 of Evans and O'Connell \cite{EO}, it suffices to consider the case that, in addition to the assumptions in the statement of the  lemma,  $g$ is time-invariant. Moreover, using the branching property of $Z$ it suffices to consider the case that $\nu = \delta_x$ for $x\in\mathbb{R}^d$. In that case, suppose that $\xi:=\{\xi_t : t\geq 0\}$ is the stochastic process whose semi-group is given by $\mathcal{P}$.  We shall use the expectation operators $\{E_x: x\in\mathbb{R}^d\}$ defined by $E_x(f(\xi_t)) = \mathcal{P}_t[f](x)$. Define a new semi-group (of the diffusion $\xi$ killed at rate $g$)
\[
\mathcal{P}^g_t[f](x)=E_x\left(e^{-\int_0^t g(\xi_s){\rm d}s} f(\xi_t)\right)
\]
for $f,g\in bp(\mathbb{R}^d)$.
Standard Feynman-Kac manipulations (see Lemma 2.3 of \cite{EO}) give us that
\begin{equation}
\mathcal{P}^g_t[f](x)=\mathcal{P}_t[f](x) - \int_0^t {\rm d}s\cdot \mathcal{P}_s[g(\cdot)\mathcal{P}^g_{t-s}[f](\cdot)](x).
\label{FK}
\end{equation}
Conditioning on the time of the first branching and recalling that branching occurs at rate $q=\psi'(\lambda^*)$ we get that
\begin{eqnarray*}
\lefteqn{e^{-\omega(x,t)}}&&\\
&=&e^{-qt}\mathcal{P}_t^g[e^{-h}](x)+q\int_0^t {\rm d}s\cdot e^{-q s}\mathcal{P}^g_s\left[
\sum_{n\geq 2} p_n e^{- n \omega(\cdot, t-s)} \int_{(0,\infty)}\eta_n({\rm d}y) e^{- y u^*_f (\cdot , t-s)}
\right](x).
 \end{eqnarray*}
 Next note from (\ref{pump-prime}) that 
 \begin{eqnarray*}
\lefteqn{ \sum_{n\geq 2} p_n e^{- n \omega(\cdot, t-s)} \int_{(0,\infty)}\eta_n({\rm d}x) e^{- x u^*_f (\cdot , t-s)}}&&\\
&=&\frac{1}{q\lambda^*} \left[ \sum_{n\geq 2}
\beta(\lambda^* e^{-\omega(\cdot, t-s)})^2\mathbf{1}_{\{n = 2\}} + 
\frac{1}{n!} \int_{(0,\infty)}(x\lambda^* e^{-\omega(\cdot, t-s)})^n  e^{-(\lambda^* + u^*_f(\cdot, t-s)) x}\Pi({\rm d}x)
\right]\\
&=&\frac{1}{q\lambda^*} \left[ 
\beta(\lambda^* e^{-\omega(\cdot, t-s)})^2+ 
 \int_{(0,\infty)} (e^{x\lambda^* e^{-\omega(\cdot, t-s)}}  -1 -x\lambda^* e^{-\omega(\cdot, t-s)} )e^{-(\lambda^* + u^*_f(\cdot, t-s)) x}\Pi({\rm d}x)
\right]\\
&=& 
\frac{1}{q\lambda^*} [H_{t-s}(\cdot, - \lambda^* e^{-\omega(\cdot, t-s)}) +q\lambda^* e^{-\omega(\cdot, t-s)}].
 \end{eqnarray*}
We now have that 
\begin{eqnarray}
\lefteqn{e^{-\omega(x,t)}}&&\notag\\
&=&e^{-qt}\mathcal{P}_t^g[e^{-h}](x)+\frac{1}{\lambda^*}\int_0^t {\rm d}s\cdot e^{-q s}\mathcal{P}^g_s \left[
H_{t-s}(\cdot, - \lambda^* e^{-\omega(\cdot, t-s)}) +q\lambda^* e^{-\omega(\cdot, t-s)}
\right](x)\notag\\
&=&\mathcal{P}_t^g[e^{-h}](x)+\frac{1}{\lambda^*}\int_0^t {\rm d}s\cdot \mathcal{P}^g_s \left[
H_{t-s}(\cdot, - \lambda^* e^{-\omega(\cdot, t-s)})
\right](x)
\label{pre-g}
 \end{eqnarray}
where the second equality follows by a standard technique found, for example, in Lemma 4.1.1. of \cite{D2};  see also the computations in \cite{EO}. 

Next, we use (\ref{FK}) and note that
\begin{eqnarray*}
\lefteqn{e^{-\omega(x,t)}} &&\\
&=& \mathcal{P}_t[e^{-h}](x) - \int_0^ t{\rm d}s\cdot \mathcal{P}_s [g(\cdot)\mathcal{P}^g_{t-s}[e^{-h}](\cdot)](x)\\
&&+\frac{1}{\lambda^*}\int_0^t {\rm d}s\cdot\Big\{ \mathcal{P}_s[H_{t-s}(\cdot, -\lambda^* e^{-\omega(\cdot, t-s)})](x)\\
&&\hspace{4cm}
- \int_0^s {\rm d}r\cdot\mathcal{P}_r[g(\cdot)\mathcal{P}_{s-r}^g[H_{t-s}(\cdot, -\lambda^* e^{-\omega(\cdot , t-s)}) ]](x)
\Big\}\\&=& \mathcal{P}_t[e^{-h}](x)+\frac{1}{\lambda^*}\int_0^t {\rm d}s\cdot \mathcal{P}_s[H_{t-s}(\cdot, -\lambda^* e^{-\omega(\cdot, t-s)}) - \lambda^*g(\cdot) e^{-\omega(\cdot, t-s)}](x)
\end{eqnarray*}
where in the final equality we have used (\ref{pre-g}) to deduce that
\begin{eqnarray*}
 \lefteqn{\int_0^ t{\rm d}s\cdot \mathcal{P}_s [g(\cdot)\mathcal{P}^g_{t-s}[e^{-h}](\cdot)](x)+
\frac{1}{\lambda^*}\int_0^ t{\rm d}s\cdot\int_0^s {\rm d}r\cdot\mathcal{P}_r[g(\cdot)\mathcal{P}_{s-r}^g[H_{t-s}(\cdot, -\lambda^* e^{-\omega(\cdot , t-s)}) ]](x)}&&\\
&=&\int_0^ t{\rm d}s\cdot \mathcal{P}_s [g(\cdot)\mathcal{P}^g_{t-s}[e^{-h}](\cdot)](x)+
\frac{1}{\lambda^*}\int_0^ t{\rm d}r\cdot\mathcal{P}_r\left[g(\cdot)\int_r^t {\rm d}s\cdot
\mathcal{P}_{s-r}^g[H_{t-s}(\cdot, -\lambda^* e^{-\omega(\cdot , t-s)})](\cdot)
\right]
(x)\\
&=&\int_0^ t{\rm d}s\cdot \mathcal{P}_s [g(\cdot)\mathcal{P}^g_{t-s}[e^{-h}](\cdot)](x)+
\frac{1}{\lambda^*}\int_0^ t{\rm d}r\cdot\mathcal{P}_r\left[g(\cdot)\int_0^{t-r} {\rm d}\theta\cdot
\mathcal{P}_{\theta}^g[H_{t-\theta-r}(\cdot, -\lambda^* e^{-\omega(\cdot , t-\theta-r)})](\cdot)
\right]
(x)\\
&=& \int_0^ t{\rm d}r\cdot \mathcal{P}_r
\left[
g(\cdot)\left\{\mathcal{P}^g_{t-r}[e^{-h}](\cdot)
+\frac{1}{\lambda^*}\int_0^{t-r} {\rm d}\theta\cdot
\mathcal{P}_{\theta}^g[H_{t-r-\theta}(\cdot, -\lambda^* e^{-\omega(\cdot , t-r-\theta)})](\cdot)\right\}
\right](x) \\
&=& \int_0^ t{\rm d}s\cdot \mathcal{P}_s
\left[g(\cdot)e^{-\omega(\cdot, t-s)}\right](x).
\end{eqnarray*}

The proof is complete as soon as we can establish uniqueness to (\ref{immigration-semigroup}). By multiplying the latter equation  through by $\lambda^*$ we note that, by an application of Lemma 2.1 of \cite{EO} (which offers sufficient conditions for solutions to a general family of integral equations), it has a unique solution providing the assumptions of that lemma are satisfied. For this purpose it suffices to check that for each $y\in\mathbb{R}^d$ and $\lambda\in[0,\lambda^*]$, $J(s, y, \lambda): = [H_s(y,-\lambda)-g(y)\lambda]$ is continuous in $s$ and that for each fixed $T>0$, there exists a $K>0$ such that 
\[
\sup_{s\leq T}\sup_{y\in\mathbb{R}^d}|J (s, y, u(y)) - J(s, y, v(y))|\leq K \sup_{y\in\mathbb{R}^d}|u(y) - v(y)|
\]
where $u$ and $v$ are any two measurable mappings from $\mathbb{R}^d$ to $[0,\lambda^*]$. In light of the assumption of boundedness on $g(y)$, thanks to the triangle inequality, it suffices to check 
that 
for each fixed $T>0$, there exists a $K>0$ such that 
\[
\sup_{s\leq T}\sup_{y\in\mathbb{R}^d}|H_s(y, -u(y)) - H_s(y, -v(y))|\leq K \sup_{y\in\mathbb{R}^d}|u(y) - v(y)|
\]
where $u$ and $v$ are any two measurable mappings from $\mathbb{R}^d$ to $[0,\lambda^*]$.

To this end let us define  for $\lambda\geq -\lambda^*$ and $u\geq 0$,
\[
 \chi_u(\lambda) := \lambda \psi'(\lambda^*)+ \beta \lambda^2 +\int_{(0,\infty)} (e^{-\lambda x} -1 + \lambda x)e^{-(\lambda^*+u)x}\Pi({\rm d}x).
\]
so that by definition
$H_s(y,\lambda) = \chi_{u^*_f(y,s)}(\lambda)$, for $\lambda\geq -\lambda^*$.
We need the following facts about $\chi_u(\lambda)$:
\begin{lemma}\label{L3}
 For $\lambda\geq -\lambda^*$ we have that
\[
\chi_u(\lambda)=\psi^*(\lambda + u) - \psi^*(u)   - \lambda[\psi^{*\prime}(u) - \psi^{*\prime}(0)].
\]
Moreover, when we allow  $\psi'(0+)\in[-\infty,\infty)$, for each $\overline{u}, \underline{u}>0$ we have
\[
 \sup_{\underline{u}\leq u\leq \overline{u}}\sup_{\lambda\in[0,\lambda^*]} |\chi'_u(-\lambda)|<\infty.
\]
If however, $-\psi'(0+)<\infty$ then we may take $\underline{u}=0$ in the above inequality.
\end{lemma}

The proof of this result is somewhat technical and disjoint from the core of the argument that we are currently pursuing, 
so its proof is postponed until the end of the paper.

With the help of the above lemma,  we see that 
for each fixed $T>0$, 
\begin{eqnarray}
\lefteqn{\sup_{s\leq T}\sup_{y\in\mathbb{R}^d}|H_s(y, -u(y)) - H_s(y, -v(y))|}&&\notag\\
&&= \sup_{s\leq T}\sup_{y\in\mathbb{R}^d}|\chi_{u^*_f(y,s)}(-u(y)) - \chi_{u^*_f(y,s)}(-v(y))|\notag\\
&&\leq \sup_{0\leq u^*\leq \overline{u}_T}\sup_{y\in\mathbb{R}^d}|\chi_{u^*}(-u(y)) - \chi_{u^*}(-v(y))|\label{canbeimproved}\\
&&\leq 
K \sup_{y\in\mathbb{R}^d}|u(y) - v(y)|\notag
\end{eqnarray}
where $u$ and $v$ are any two measurable mappings from $\mathbb{R}^d$ to $[0,\lambda^*]$,
\begin{equation}\label{needsfinitederiv}
K=\sup_{0\leq u^*\leq \overline{u}_T}\sup_{\lambda\in[0,\lambda^*]}|\chi'_{u^*}(-\lambda)|<\infty
\end{equation}
(observe that the inequality is true if and only if $\psi'(0+)>-\infty$)
and 
\[
\overline{u}_T = \sup_{s\leq T}\sup_{y\in\mathbb{R}^d}u^*_f(y,s)<\infty.
\]
Note that the finiteness of $\overline{u}_T$ can be deduced as follows. Suppose, without loss of generality, that $f$ is bounded by $\theta\geq 0$. 
Then for all $y\in\mathbb{R}^d$ and $s\geq 0$,
\[
e^{-u_f^*(y, s)} = \mathbb{E}^*_{\delta_y}(e^{-\langle f, X_s \rangle})\geq \mathbb{E}^*_{\delta_y}(e^{-\theta ||X_s||}) = e^{- U_\theta^*(s)}
\]
where $U^*_\theta(s)$ is the unique solution to the equation
\begin{equation}
\label{csb-evolution}
U_\theta^*(s) + \int_0^s \psi^*(U_\theta^*(u)){\rm d}u  = \theta.
\end{equation}
Hence we have $\overline{u}_T \le \sup_{s\leq T}U_\theta^*(s) <\infty$.
\end{proof}

\bigskip

\begin{proof}[Proof of Theorem \ref{PDE}]
Recall from earlier remarks that it suffices to prove  (\ref{toprove}). 
Putting Lemma \ref{L1} and Lemma \ref{L2} together it thus suffices to show that when $g_{t-s}(\cdot) = \phi(u^*_f(\cdot, t-s))$, with  $\phi(\lambda) = \psi^{*\prime}(\lambda) -\psi^{*\prime}(0)$,  we have that $\exp\{-\omega(x,t)\}$ is the unique solution to (\ref{semi}). 
For this to be the case, it is enough that 
\begin{eqnarray}
\nonumber
 \lefteqn{H_{t-s}(\cdot, - \lambda^* e^{-\omega(\cdot , t-s)}) - \lambda^* \phi(u^*_f(\cdot, t-s))e^{-\omega(\cdot, t-s)} }&&\\
&&\hspace{3cm}= \psi^*(- \lambda^*e^{-\omega(\cdot, t-s)} + u^*_f(\cdot, t-s)) -\psi^*(u^*_f(\cdot, t-s)).
\label{finishesproof}
\end{eqnarray}
Note that  in order to appeal to Lemma \ref{L2} above,  we require that $g_s(y)$ is bouned on each finite time horizon of $s$. This follows by virtue of the fact that $\phi$ is a Bernstein function (and therefore concave) and that, as indicated in the proof of Lemma \ref{L2}, for each fixed $T>0$, $0\leq \sup_{s\leq T}\sup_{y\in\mathbb{R}^d}u^*_f(y,s)<\infty$. To prove (\ref{finishesproof}), note that 
\begin{eqnarray*}
\lefteqn{ H_{t-s}(\cdot, -\lambda^* e^{-\omega(\cdot, t-s)} ) - \lambda^*\phi(u^*_f(\cdot, t-s))e^{-\omega(\cdot, t-s)}}&&\\
&& = \chi_{u^*_f(\cdot, t-s)}(-\lambda^* e^{-\omega(\cdot, t-s)}) -
\lambda^* e^{-\omega(\cdot, t-s)}[\psi^{*\prime}(u^*_f(\cdot, t-s))- \psi^{*\prime}(0)]
\end{eqnarray*}
and the desired equality follows by Lemma \ref{L3}.
\end{proof}

Note that up until this point in our reasoning, there are only two points where we have used the assumption that $-\psi'(0+)<\infty$. 
The first place occurs at (\ref{mild}) where classical literature imposes the aforesaid assumption as a sufficient condition to guarantee that a unique non-negative solution exists. The second place occurs is in justifying  the finiteness in (\ref{needsfinitederiv}). See in particular Lemma \ref{L3}. 

Morally speaking the imposition of $-\psi'(0+)<\infty$ in these two cases boils down to the same issue of allowing the application of Gronwall's Lemma to establish the existence of a unique non-negative solution to an integral equation. Moreover, it seems difficult to see how one might remove this condition in general. To see why consider, for example (\ref{needsfinitederiv}). Let $f$ be a compactly supported function and consider all $y$ outside of the support of $f$. It is then clear by definition that  for this $f$ and all such $y,$
$$
u^*_f(y,0)=0,
$$
and hence $\chi_{u^*_f(y,0)}(\lambda)=\chi_0(\lambda) = \psi^*(\lambda)$. If in addition $\psi'(0+)=-\infty$, the failure of the function $\chi_0$ to be Lipschitz on $[-\lambda^*,0]$ prevents us from deducing  (\ref{needsfinitederiv}).

We conclude by reviewing the above arguments when spatial motion is disregarded (formally $\mathcal{P}$ corresponds to a particle remaining stationary at a point) thereby giving the proof of Theorem \ref{main-2}.

\begin{proof}[Proof of Theorem \ref{main-2}]
Suppose now that $-\psi'(0+)=\infty$ and $\mathcal{P}$ corresponds to a particle remaining stationary at a point. It suffices to show that the two points (noted in the discussion above) where the condition $-\psi'(0+)<\infty$ was used no longer need this assumption.

With regard to the use of (\ref{mild}), recall that, in the current setting where $X_t = ||X_t||$ for all $t\geq 0$, (\ref{mild}) collapses to the integral equation (\ref{csb-evolution}). Moreover,  as alluded to in the discussion at the beginning of Section  \ref{loglaplace}, (\ref{csb-evolution}) always has a unique non-negative solution even when $-\psi'(0+)=\infty$; see also Remark \ref{CSBPsemigrp}.

With regard to justifying  the finiteness in (\ref{needsfinitederiv}), note that in the current setting, the quantity $u^*_f(y,s)$ can be replaced by $U^*_\theta(s)$ for $\theta\geq 0$. Since  $\underline{u}_T: = \inf_{s\leq T}U^*_\theta(s)> 0$ then the estimate in (\ref{canbeimproved}) can be replaced by 
\[
\sup_{\underline{u}_T\leq u^*\leq \overline{u}_T}\sup_{y\in\mathbb{R}^d}|\chi_{u^*}(-u(y)) - \chi_{u^*}(-v(y))|.
\]
Now proceeding with the proof of Lemma \ref{L2}, taking note of the conclusion of Lemma \ref{L3}, we see that the condition $-\psi(0+)<\infty$ is no longer necessary.

\end{proof}

\subsection{Proof of Lemma \ref{L3}}



 It is a strightforward algabraic exercise to deduce that 
\begin{eqnarray*}
  \lefteqn{\psi^*(\lambda + u) - \psi^*(u) }&&\\
&=& \lambda\left( 2\beta u + \alpha^* + \int_{(0,1)} (1- e^{-ux})x e^{-\lambda^* x}\Pi({\rm d}x)\right) \\
&&+\beta \lambda^2 + \int_{(0,\infty)}(e^{-\lambda x}-1 +\lambda x \mathbf{1}_{\{x<1\}})e^{-(\lambda^* +u)x}\Pi({\rm d}x).
\end{eqnarray*}
It follows, with the help of (\ref{psistar}), that 
\begin{eqnarray*}
 \lefteqn{\psi^*(\lambda + u) - \psi^*(u) }&&\\
&=&\chi_u(\lambda) -\lambda \psi'(\lambda^*) - \lambda\int_{[1,\infty)}x e^{-(\lambda^* + u)x}\Pi({\rm d}x)\\
&&+\lambda\left(2\beta u + \alpha^* + \int_{(0,1)} (1- e^{-ux})x e^{-\lambda^* x}\Pi({\rm d}x)\right)\\
&=&\chi_u(\lambda) +\lambda\left(2\beta u + \alpha^* + \int_{(0,\infty)} (\mathbf{1}_{\{x<1\}}- e^{-ux})x e^{-\lambda^* x}\Pi({\rm d}x) - \psi^{*\prime}(0)\right)
\end{eqnarray*}
However, it is again a simple exercise to deduce from (\ref{psistar})  that 
\[
\psi^{*\prime}(u) = 2\beta u + \alpha^* + \int_{(0,\infty)} (\mathbf{1}_{\{x<1\}}- e^{-ux})x e^{-\lambda^* x}\Pi({\rm d}x)
\]
and hence the first part of the lemma follows.

Next notice that $\chi_u(\lambda)$ is the Laplace exponent of a spectrally positive L\'evy process and therefore strictly convex and infinitely smooth on $(-\lambda^*,\infty)$. Moreover, remembering that $\psi^*(\lambda) = \psi(\lambda+ \lambda^*)$, we have for all $\lambda\geq -\lambda^*$, 
\[
 \chi'_u(\lambda) = \psi'(\lambda+\lambda^*+u) - \psi'(\lambda^*+u)+\psi'(\lambda^*).
\]
The proof of the remaining parts of the lemma are now straightforward. \hfill$\square$

\section*{Acknowledgment} 

A.E.K. would like to thank  Thomas Duquesne for an enlightening discussion. All three authors are grateful to two anonymous referees for their comments. In particular, one referee read the paper meticulously and accordingly alerted us to many improvements of the original version.   J.B. and A.E.K.  acknowledge the support of Royal Society Grant nr. IV0871854 and the Bath Institute for Complex Systems, J.B. is Supported by the \emph{Agence Nationale de la
Recherche} grants ANR-08-BLAN-0220-01 and ANR-08-BLAN-0190 and A.M-S. acknowledges the support of CONACyT grant number 000000000093984.


\begin{thebibliography}{99}


\bibitem{AD} Abraham, R. and Delmas, J-F. (2009) A continuum-tree-valued Markov process. \texttt{http://arxiv.org/abs/0904.4175}






\bibitem{Betal} Bertoin, J., Fontbona, J. and Mart\'inez, S. (2008) On prolific individuals in a continuous-state branching process. {\it J. Appl. Probab.} {\bf 45} 714-726.












\bibitem{DW} Duquesne, T. and Winkel, M. (2007) Growth of L\'evy trees. {\it Probab. Theory  Relat. Fields}, {\bf 139}, 313-371.


\bibitem{D1} Dynkin, E. B. (1993)
Superprocesses and partial differential equations.  \textit{Ann.
Probab.} {\bf 21}, 1185-1262.








\bibitem{D2} Dynkin, E.B. (2002) {\it Diffusions, superprocesses and Partial Differential Equations}
American Mathematical Society, Colloquium Publications, Volume 50,
    Providence, Rhode Island.
    
    \bibitem{DKuz} Dynkin, E.B. and Kuznetsov, S.E. (1995) Markov snakes and superprocesses. {\it Probab. Theory Relat. Fields} {\bf 103}, 433-473. 
    
    \bibitem{DK} Dynkin, E.B. and Kuznetsov, S.E. (2004) $\mathbb{N}$-measures for branching exit Markov systems and their applications to differential equations. {\it Probab. Theory Relat. Fields} {\bf 130}, 135-150. 

\bibitem{EP}  Engl\"{a}nder, J. and Pinsky, R.G. (1999) On the construction
and support properties of measure-valued diffusions on $D\subseteq
R^{d}$ with spatially dependent branching. \textit{Ann. Probab.}
27(2) 684-730.




\bibitem{EW} Etheridge, A. and Williams, D.R.E. (2003) A decomposition of the $(1+\beta)$-superprocess conditioned on survival. {\it Proc. Royal. Soc. Edin.} {\bf 133A} 829-847.

\bibitem{E} Evans, S.N. (1993)  Two representations of a superprocess. {\it Proc. Royal. Soc. Edin.} {\bf 123A} 959-971.


\bibitem{EO}  Evans, S. N. and O'Connell, N. (1994) Weighted Occupation
Time for Branching Particle Systems and a Representation for the
Supercritical Superprocess. \textit{Canad. Math. Bull.}, 37
187-196.

\bibitem{FSt} Fleischmann, K. and Sturm, A. (2004) A super-stable motion with infinite mean branching.
{\it Ann. Inst. H. Poincar\'e} {\bf 40}, 513-537.

\bibitem{FS} Fleischmann, K. and Swart, J.M.  (2002) Trimmed trees and embedded particle systems {\it Ann. Probab.} {\bf 32}, 2179-2221.




\bibitem{G} Grey, D.R. (1974) Asymptotic
behaviour of continuous-time continuous-state-space branching
processes. {\it J. Appl. Probab.}, {\bf 11}, 669-677.

\bibitem{HH} Harris, S.C. and Hardy, R. (2009) {\it S\'eminaire de Probabilit\'es} {\bf XLII}, 281-330.




%








\bibitem{King} Kingman, J.F.C. (1993) {\it Poisson Processes} Oxford University Press. 




\bibitem{K} Kyprianou, A.E. (2006) {\it Introductory lectures on fluctuations of L\'evy processes with applications.} Springer.

\bibitem{L} Lamperti, J. (1967) \rm Continuous-state branching processes. \it Bull.
Amer. Math. Soc., \rm {\bf 73}, 382--386.

\bibitem{LeG}  Le Gall, J-F. (1999) {\it Spatial branching processes, random snakes and partial differential equations.} Lectures in Mathematics, ETH Z\"urich, Birkh\"auser.










\bibitem{RR} Roelly-Coppeletta, S. and Rouault, A. (1989) Processus de Dawson-Watanabe condition\'{e} par le futur lointain. {\it C.R. Acad. Sci. Paris S\'{e}rie I} {\bf 309} 867--872.






\bibitem{S} Sheu, Y-C. (1997) Lifetime and compactness of range for super-Brownian motion with a general branching mechanism. {\it Stoch. Proc. Appl.} {\bf 70}, 129-141.






\end{thebibliography}
\end{document}